
\documentclass[executivepaper,oneside]{article}
\usepackage{amsfonts,amsmath,amssymb}
\usepackage{geometry}

\setcounter{MaxMatrixCols}{10}

\newtheorem{theorem}{Theorem}[section]

\newtheorem{corollary}[theorem]{Corollary}

\newtheorem{definition}{Definition}[section]

\newtheorem{lemma}{Lemma}[section]

\newtheorem{remark}[theorem]{Remark}

\numberwithin{equation}{section}
\newenvironment{proof}[1][Proof]{\noindent\textbf{#1.} }{\ \rule{0.5em}{0.5em}}
\allowdisplaybreaks

\begin{document}

\title{{\small SOME GRUSS-TYEP INEQUALITIES USING GENERALIZED KATUGAMPOLA
FRACTIONAL INTEGRAL}}
\author{ {\small Tariq A. Aljaaidi, Deepak B. Pachpatte}}
\date{}
\maketitle

\begin{abstract}
The main objective of this paper is to obtain generalization of some
Gruss-type inequalities in case of functional bounds by using a generalized
Katugampola fractional integral.
\end{abstract}

\textbf{Keywords: }Gruss inequality; generalized fractional integral.

\section{\protect\small INTRODUCTION:}

In 1935, G. Gr\"{u}ss proved the renowned integral inequality \cite{aa} (see
also \cite{cc}):%
\begin{eqnarray}
&&\left\vert \frac{1}{b-a}\int_{a}^{b}f\left( x\right) g\left( x\right) dx-%
\frac{1}{\left( b-a\right) ^{2}}\int_{a}^{b}f\left( x\right)
dx\int_{a}^{b}g\left( x\right) dx\right\vert  \label{inq55} \\
&&\left. \leq \frac{1}{4}\left( M-m\right) \left( P-p\right) \right. ,
\notag
\end{eqnarray}%
where $f,g$ are two integrable functions on $\left[ a,b\right] ,$ satisfying
the conditions \ \ \

$\ \ \ \ m\leq f\left( x\right) \leq M$ \ \ and \ \ $p\leq g\left( x\right)
\leq P$ \ \ for all $x\in \left[ a,b\right] ,$ \ \ $m,M,p,P\in
\mathbb{R}
.$

\ \ In recent years, the inequalities are getting to play a very vital role
in all mathematical fields, especially after the creation of fractional
calculus which gave rise to several results and important theories in
mathematics, engineering, physics, and other fields of science.

A remarkably large number inequalities of above type involving the\ special
fractional integral (such as the Liouville, Riemann--Liouville, Erd\'{e}%
lyi-Kober, Katugampola, Hadamard and Weyl types) have been investigated by
many researchers and received considerable attention to it, see( \cite{uu},
\cite{kk}, \cite{ll}, \cite{rr}, \cite{mm}, \cite{tt}, \cite{nn}, \cite{qq},
\cite{oo}, \cite{ss}, \cite{pp}).

Gruss type inequality which has some important applications in a number of
mathematical fields, like an integral arithmetic mean, difference equations
and h-integral arithmetic mean (see \cite{dd}, \cite{bb}).

Dahmani et al. \cite{ee}, in (2010), proved the following fractional version
inequality by using Riemann--Liouville fractional integral%
\begin{equation}
\left\vert \frac{x^{\alpha }}{\Gamma \left( \alpha +1\right) }\mathcal{J}%
^{\alpha }\left( fg\right) \left( x\right) -\mathcal{J}^{\alpha }f\left(
x\right) \mathcal{J}^{\alpha }g\left( x\right) \right\vert \leq \left( \frac{%
x^{\alpha }}{\Gamma \left( \alpha +1\right) }\right) ^{2}\left( M-m\right)
\left( P-p\right) ,  \label{INQ17}
\end{equation}%
for one parameter, and%
\begin{eqnarray}
&&\left( \frac{x^{\alpha }}{\Gamma \left( \alpha +1\right) }\mathcal{J}%
^{\beta }\left( fg\right) \left( x\right) -\mathcal{J}^{\alpha }f\left(
x\right) \mathcal{J}^{\beta }g\left( x\right) \right.  \notag \\
&&\left. +\frac{x^{\beta }}{\Gamma \left( \beta +1\right) }\mathcal{J}%
^{\alpha }\left( fg\right) \left( x\right) -\mathcal{J}^{\beta }f\left(
x\right) \mathcal{J}^{\alpha }g\left( x\right) \right) ^{2}  \notag \\
&&\left. \text{ }\leq \left[ \left( M\frac{x^{\alpha }}{\Gamma \left( \alpha
+1\right) }-\mathcal{J}^{\alpha }f\left( x\right) \right) \left( \mathcal{J}%
^{\beta }f\left( x\right) -m\frac{x^{\beta }}{\Gamma \left( \beta +1\right) }%
\right) \right. \right.  \notag \\
&&\left. \text{ }+\left( \mathcal{J}^{\alpha }f\left( x\right) -m\frac{%
x^{\alpha }}{\Gamma \left( \alpha +1\right) }\right) \left( M\frac{x^{\beta }%
}{\Gamma \left( \beta +1\right) }-\mathcal{J}^{\beta }f\left( x\right)
\right) \right]  \label{inq56} \\
&&\text{ \ }\times \left[ \left( P\frac{x^{\alpha }}{\Gamma \left( \alpha
+1\right) }-\mathcal{J}^{\alpha }g\left( x\right) \right) \left( \mathcal{J}%
^{\beta }g\left( x\right) -p\frac{x^{\beta }}{\Gamma \left( \beta +1\right) }%
\right) \right.  \notag \\
&&\left. \text{ }+\left( \mathcal{J}^{\alpha }g\left( x\right) -p\frac{%
x^{\alpha }}{\Gamma \left( \alpha +1\right) }\right) \left( P\frac{x^{\beta }%
}{\Gamma \left( \beta +1\right) }-\mathcal{J}^{\beta }g\left( x\right)
\right) \right] ,  \notag
\end{eqnarray}%
for two parameters, where $f,g$ are two integrable functions on $\left[
0,\infty \right) ,$ satisfying the conditions%
\begin{equation}
\text{ \ \ \ }m\leq f\left( x\right) \leq M\text{ \ \ }and\text{ \ \ }p\leq
g\left( x\right) \leq P\text{ \ \ }for\text{ }all\text{ }x\in \left[
0,\infty \right) ,\text{ \ \ }m,M,p,P\in
\mathbb{R}
.  \label{INQ18}
\end{equation}

In (2014), Tariboon et al. \cite{ff}, replaced the constants which appeared
as bounds of the functions $f$\ and $g$ by four integrable functions on $%
\left[ 0,\infty \right) $, as\ \ $\varphi _{1}\left( x\right) \leq f\left(
x\right) \leq \varphi _{2}\left( x\right) $ \ \ $and$ \ \ $\psi _{1}\left(
x\right) \leq g\left( x\right) \leq \psi _{2}\left( x\right) ,$ they
obtained inequality%
\begin{equation*}
\left\vert \frac{x^{\alpha }}{\Gamma \left( \alpha +1\right) }\mathcal{J}%
^{\alpha }\left( fg\right) \left( x\right) -\mathcal{J}^{\alpha }f\left(
x\right) \mathcal{J}^{\alpha }g\left( x\right) \right\vert \leq \sqrt{%
T\left( f,\varphi _{1},\varphi _{2}\right) T\left( g,\psi _{1},\psi
_{2}\right) },
\end{equation*}%
where $T\left( u,v,w\right) $ is defined by%
\begin{eqnarray*}
T\left( u,v,w\right) &=&\left( \mathcal{J}^{\alpha }\omega \left( x\right) -%
\mathcal{J}^{\alpha }u\left( x\right) \right) \left( \mathcal{J}^{\alpha
}u\left( x\right) -\mathcal{J}^{\alpha }v\left( x\right) \right) \\
&&+\frac{x^{\alpha }}{\Gamma \left( \alpha +1\right) }\mathcal{J}^{\alpha
}\left( uv\right) \left( x\right) -\mathcal{J}^{\alpha }u\left( x\right)
\mathcal{J}^{\alpha }v\left( x\right) \\
&&+\frac{x^{\alpha }}{\Gamma \left( \alpha +1\right) }\mathcal{J}^{\alpha
}\left( u\omega \right) \left( x\right) -\mathcal{J}^{\alpha }u\left(
x\right) \mathcal{J}^{\alpha }\omega \left( x\right) \\
&&-\frac{x^{\alpha }}{\Gamma \left( \alpha +1\right) }\mathcal{J}^{\alpha
}\left( v\omega \right) \left( x\right) +\mathcal{J}^{\alpha }v\left(
x\right) \mathcal{J}^{\alpha }\omega \left( x\right) .
\end{eqnarray*}

Motivated from above mentioned results, our purpose in this paper is to
establish some new results on Gruss-type inequalities in case of functional
bounds using the generalized Katugampola fractional integral.

\section{\protect\small PRELIMINARIES:}

\ \ \ In this section, we give some definitions and properties available in
literature that will be used in our paper, for more details (see \cite{hh},
\cite{jj}, \cite{gg}).

\begin{definition}
Consider the space $X_{c}^{p}\left( a,b\right) \left( c\in
\mathbb{R}
,1\leq p\leq \infty \right) $, of those complex valued Lebesgue measurable
functions $v$ on $\left( a,b\right) $ for which the norm $\left\Vert
v\right\Vert _{X_{c}^{p}}<\infty $, such that%
\begin{equation*}
\left\Vert v\right\Vert _{X_{c}^{p}}=\left( \int_{x}^{b}\left\vert
x^{c}v\right\vert ^{p}\frac{dx}{x}\right) ^{\frac{1}{p}},\text{ \ \ \ }%
\left( 1\leq p<\infty \right)
\end{equation*}%
and \
\begin{equation*}
\left\Vert v\right\Vert _{X_{c}^{\infty }}=\sup ess_{x\in \left( a,b\right) }
\left[ x^{c}\left\vert v\right\vert \right] .
\end{equation*}%
In particular, when $c=1/p,$ the space $X_{c}^{p}\left( a,b\right) $
coincides with the space $L^{p}\left( a,b\right) .$
\end{definition}

\begin{definition}
\label{dif1}The left- and right-sided fractional integrals of a function $v$
where $v\in $ $X_{c}^{p}\left( a,b\right) ,$ $\alpha >0,$ and $\beta ,\rho
,\eta ,k\in
\mathbb{R}
,$ are defined respectively by%
\begin{equation}
\text{ }^{\rho }\mathcal{J}_{a+;\eta ,k}^{\alpha ,\beta }v\left( x\right) =%
\frac{\rho ^{1-\beta }x^{k}}{\Gamma \left( \alpha \right) }\int_{a}^{x}\frac{%
\tau ^{\rho \left( \eta +1\right) -1}}{\left( x^{\rho }-\tau ^{\rho }\right)
^{1-\alpha }}v\left( \tau \right) d\tau ,\text{ \ \ \ \ \ \ \ \ }0\leq
a<x<b\leq \infty ,  \label{oper2}
\end{equation}%
and%
\begin{equation}
\text{ }^{\rho }\mathcal{J}_{b-;\eta ,k}^{\alpha ,\beta }v\left( x\right) =%
\frac{\rho ^{1-\beta }x^{\rho \eta }}{\Gamma \left( \alpha \right) }%
\int_{x}^{b}\frac{\tau ^{k+\rho -1}}{\left( \tau ^{\rho }-x^{\rho }\right)
^{1-\alpha }}v\left( \tau \right) d\tau ,\text{ \ \ \ \ \ \ \ \ }0\leq
a<x<b\leq \infty ,  \label{eq1}
\end{equation}%
if the integral exist.
\end{definition}

To present and discuss our new results in this paper we use the left-sided
fractional integrals, the right sided fractional can be proved similarly,
also we consider $a=0,$ in (\ref{oper2}), to obtain%
\begin{equation*}
^{\rho }\mathcal{I}_{\eta ,k}^{\alpha ,\beta }v\left( x\right) =\frac{\rho
^{1-\beta }x^{k}}{\Gamma \left( \alpha \right) }\int_{0}^{x}\frac{\tau
^{\rho \left( \eta +1\right) -1}}{\left( x^{\rho }-\tau ^{\rho }\right)
^{1-\alpha }}v\left( \tau \right) d\tau .
\end{equation*}

The above fractional integral has the following Composition (index) formulae%
\begin{equation*}
\text{ }^{\rho }\mathcal{J}_{a+;\eta _{1},k_{1}}^{\alpha _{1},\beta _{1}}%
\text{ }^{\rho }\mathcal{J}_{a+;\eta _{2},-\rho \eta _{1}}^{\alpha
_{2},\beta _{2}}v=\text{ }^{\rho }\mathcal{J}_{a+;\eta _{2},k_{1}}^{\alpha
_{1}+\alpha _{2},\beta _{1}+\beta _{2}}v,
\end{equation*}%
\begin{equation*}
\text{ }^{\rho }\mathcal{J}_{b-;\eta _{1},-\rho \eta _{2}}^{\alpha
_{1},\beta _{1}}\text{ }^{\rho }\mathcal{J}_{b-;\eta _{2},k_{2}}^{\alpha
_{2},\beta _{2}}v=\text{ }^{\rho }\mathcal{J}_{a+;\eta _{1},k_{2}}^{\alpha
_{1}+\alpha _{2},\beta _{1}+\beta _{2}}v.
\end{equation*}

For the convenience of establishing our results we define the following
function as in \cite{gg}: let $x>0,$ $\alpha >0,$ $\rho ,k,\beta ,\eta \in
\mathbb{R}
$, then
\begin{equation*}
\Lambda _{x,k}^{\rho ,\beta }\left( \alpha ,\eta \right) =\frac{\Gamma
\left( \eta +1\right) }{\Gamma \left( \eta +\alpha +1\right) }\rho ^{-\beta
}x^{k+\rho \left( \eta +\alpha \right) }.
\end{equation*}%
\ \

If $\eta =0,$ $a=0,$ $k=0,$ and taking the limit $\rho \rightarrow 1,$ the
Definition (\ref{dif1}) reduce to Liouville fractional integral and if $\eta
=0,$ $k=0,$ and taking the limit $\rho \rightarrow 1,$ we can get
Riemann-Liouville fractional integral, it is reduce to Weyl fractional
integral, if $\eta =0,$ $a=-\infty ,$ $k=0,$ and taking the limit $\rho
\rightarrow 1,$ for Erd\'{e}lyi-Kober fractional integral, we put $\beta =0,$
$k=-\rho \left( \alpha +\eta \right) ,$ we can also getting Katugampola
fractional integral by taking $\beta =\alpha ,$ $k=0,$ $\eta =0,$ And
finally Hadamard fractional integral if $\beta =\alpha ,$ $k=0,$ $\eta
=0^{+},$ and taking the limit $\rho \rightarrow 1.$

The Definition (\ref{dif1}) is more generalized and can be reduce to six
cases by change its parameters with appropriate choice.

\section{\protect\small Main Results:}

Now, we give our main results on Gruss type inequality in case of functional
bounds.

\begin{theorem}
\label{thm5} Let $v$ be an integrable function on $\left[ 0,\infty \right) .$
Assume that there exist two integrable functions $z_{1},z_{2}$ on $\left[
0,\infty \right) $ such that \
\begin{equation}
z_{1}\left( x\right) \leq v\left( x\right) \leq z_{2}\left( x\right) \ \ \ \
\ \ \forall x\in \left[ 0,\infty \right) .  \label{cond1}
\end{equation}%
Then, for all $x>0,$ $\alpha >0,$ $\rho >0,$ $\delta >0,$ $\beta ,\eta
,k,\lambda \in
\mathbb{R}
,$ we have

\begin{eqnarray}
&&^{\rho }\mathcal{J}_{\eta ,k}^{\alpha ,\beta }z_{2}\left( x\right) \text{ }%
^{\rho }\mathcal{J}_{\eta ,k}^{\delta ,\lambda }v\left( x\right) +\text{ }%
^{\rho }\mathcal{J}_{\eta ,k}^{\alpha ,\beta }v\left( x\right) \text{ }%
^{\rho }\mathcal{J}_{\eta ,k}^{\delta ,\lambda }z_{1}\left( x\right)
\label{inq66} \\
&&\left. \text{ }\geq \text{ }^{\rho }\mathcal{J}_{\eta ,k}^{\alpha ,\beta
}v\left( x\right) \text{ }^{\rho }\mathcal{J}_{\eta ,k}^{\delta ,\lambda
}v\left( x\right) +\text{ }^{\rho }\mathcal{J}_{\eta ,k}^{\alpha ,\beta
}z_{2}\left( x\right) \text{ }^{\rho }\mathcal{J}_{\eta ,k}^{\delta ,\lambda
}z_{1}\left( x\right) \right.  \notag
\end{eqnarray}
\end{theorem}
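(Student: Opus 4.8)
The plan is to reduce the whole statement to the nonnegativity of a single product and then integrate against the two fractional-integral kernels, which are themselves nonnegative. From hypothesis (\ref{cond1}), at every point $z_2-v\ge 0$ and $v-z_1\ge 0$; hence, for any two independent arguments $\tau,\sigma\in(0,x)$,
$$\bigl(z_2(\tau)-v(\tau)\bigr)\bigl(v(\sigma)-z_1(\sigma)\bigr)\ge 0,$$
since each factor is individually nonnegative. This single inequality already encodes the result; everything else is weighting and integration.

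Next I would record the nonnegativity of the kernels. Writing
$$H_1(x,\tau)=\frac{\rho^{1-\beta}x^{k}}{\Gamma(\alpha)}\,\frac{\tau^{\rho(\eta+1)-1}}{(x^{\rho}-\tau^{\rho})^{1-\alpha}},\qquad H_2(x,\sigma)=\frac{\rho^{1-\lambda}x^{k}}{\Gamma(\delta)}\,\frac{\sigma^{\rho(\eta+1)-1}}{(x^{\rho}-\sigma^{\rho})^{1-\delta}},$$
one sees that for $x>0$, $\alpha,\delta>0$, $\rho>0$ every factor is positive on $0<\tau,\sigma<x$, so $H_1,H_2\ge 0$. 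Multiplying the pointwise inequality by the nonnegative weight $H_1(x,\tau)H_2(x,\sigma)$ preserves its sign.

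Then I would integrate in $\tau$ over $[0,x]$ and in $\sigma$ over $[0,x]$. Because the integrand is a product of a function of $\tau$ and a function of $\sigma$, the double integral factors into products of single integrals, and each single integral is by definition one of ${}^{\rho}\mathcal{J}_{\eta,k}^{\alpha,\beta}$ (the $\tau$-integral) or ${}^{\rho}\mathcal{J}_{\eta,k}^{\delta,\lambda}$ (the $\sigma$-integral) acting on $z_2$, $v$ or $z_1$. Expanding $(z_2-v)(v-z_1)=z_2v-z_2z_1-v\,v+v\,z_1$ and collecting the four resulting products gives
$${}^{\rho}\mathcal{J}_{\eta,k}^{\alpha,\beta}z_2\,{}^{\rho}\mathcal{J}_{\eta,k}^{\delta,\lambda}v-{}^{\rho}\mathcal{J}_{\eta,k}^{\alpha,\beta}z_2\,{}^{\rho}\mathcal{J}_{\eta,k}^{\delta,\lambda}z_1-{}^{\rho}\mathcal{J}_{\eta,k}^{\alpha,\beta}v\,{}^{\rho}\mathcal{J}_{\eta,k}^{\delta,\lambda}v+{}^{\rho}\mathcal{J}_{\eta,k}^{\alpha,\beta}v\,{}^{\rho}\mathcal{J}_{\eta,k}^{\delta,\lambda}z_1\ge 0.$$
Transposing the two subtracted terms to the right-hand side yields exactly (\ref{inq66}).

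The only delicate point—more a matter of bookkeeping than a real obstacle—is justifying that the double integral is well defined and genuinely separates into the product of two one-dimensional fractional integrals. This is guaranteed by the assumed integrability of $v,z_1,z_2$ together with the integrable nature of the kernel singularities at $\tau=x$ and $\sigma=x$ (valid since $\alpha,\delta>0$), which legitimizes applying Fubini's theorem. I expect no Korkine identity or Cauchy--Schwarz step to be needed: the inequality is one-sided and follows directly from the sign of a single product, so the argument should be short.
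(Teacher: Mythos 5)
Your proposal is correct and follows essentially the same route as the paper: both start from the pointwise nonnegativity of $\left(z_{2}\left(\tau\right)-v\left(\tau\right)\right)\left(v\left(\sigma\right)-z_{1}\left(\sigma\right)\right)$, multiply by the nonnegative kernels defining $^{\rho }\mathcal{J}_{\eta ,k}^{\alpha ,\beta }$ and $^{\rho }\mathcal{J}_{\eta ,k}^{\delta ,\lambda }$, and integrate in $\tau$ and $\sigma$ over $\left(0,x\right)$. The only cosmetic difference is that the paper integrates sequentially (first in $\tau$, then in $\sigma$) whereas you form a single double integral and separate it via Fubini, which is the same computation with an added (and welcome) justification of well-definedness.
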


\begin{proof}
From the condition (\ref{cond1}), for all $\tau \geq 0,$ $\sigma \geq 0,$ we
have
\begin{equation*}
\left( \ v\left( \sigma \right) -z_{1}\left( \sigma \right) \right) \left(
z_{2}\left( \tau \right) -v\left( \tau \right) \right) \geq 0.
\end{equation*}

Therefore
\begin{equation}
v\left( \sigma \right) z_{2}\left( \tau \right) +z_{1}\left( \sigma \right)
v\left( \tau \right) \geq v\left( \sigma \right) v\left( \tau \right)
+z_{1}\left( \sigma \right) z_{2}\left( \tau \right) .  \label{inq 2}
\end{equation}

Multiplying both sides of (\ref{inq 2}) by $\frac{\rho ^{1-\beta }x^{k}}{%
\Gamma \left( \alpha \right) }\frac{\tau ^{\rho \left( \eta +1\right) -1}}{%
\left( x^{\rho }-\tau ^{\rho }\right) ^{1-\alpha }},$ where $\tau \in \left(
0,x\right) $ and integrating with respect to $\tau $ over $\left( 0,x\right)
$, we get

\begin{eqnarray*}
&&v\left( \sigma \right) \frac{\rho ^{1-\beta }x^{k}}{\Gamma \left( \alpha
\right) }\int_{0}^{x}\frac{\tau ^{\rho \left( \eta +1\right) -1}}{\left(
x^{\rho }-\tau ^{\rho }\right) ^{1-\alpha }}z_{2}\left( \tau \right) d\tau \\
&&+z_{1}\left( \sigma \right) \frac{\rho ^{1-\beta }x^{k}}{\Gamma \left(
\alpha \right) }\int_{0}^{x}\frac{\tau ^{\rho \left( \eta +1\right) -1}}{%
\left( x^{\rho }-\tau ^{\rho }\right) ^{1-\alpha }}v\left( \tau \right)
d\tau \text{ } \\
&&\left. \text{ }\geq v\left( \sigma \right) \frac{\rho ^{1-\beta }x^{k}}{%
\Gamma \left( \alpha \right) }\int_{0}^{x}\frac{\tau ^{\rho \left( \eta
+1\right) -1}}{\left( x^{\rho }-\tau ^{\rho }\right) ^{1-\alpha }}v\left(
\tau \right) d\tau \right. \\
&&\text{ \ }+z_{1}\left( \sigma \right) \frac{\rho ^{1-\beta }x^{k}}{\Gamma
\left( \alpha \right) }\int_{0}^{x}\frac{\tau ^{\rho \left( \eta +1\right)
-1}}{\left( x^{\rho }-\tau ^{\rho }\right) ^{1-\alpha }}z_{2}\left( \tau
\right) d\tau ,
\end{eqnarray*}%
so we have%
\begin{equation}
v\left( \sigma \right) \text{ }^{\rho }\mathcal{J}_{\eta ,k}^{\alpha ,\beta
}z_{2}\left( x\right) +z_{1}\left( \sigma \right) \text{ }^{\rho }\mathcal{J}%
_{\eta ,k}^{\alpha ,\beta }v\left( x\right) \geq v\left( \sigma \right)
\text{ }^{\rho }\mathcal{J}_{\eta ,k}^{\alpha ,\beta }v\left( x\right)
+z_{1}\left( \sigma \right) \text{ }^{\rho }\mathcal{J}_{\eta ,k}^{\alpha
,\beta }z_{2}\left( x\right) .  \label{INQ3}
\end{equation}%
Multiplying both sides of (\ref{INQ3}) by $\frac{\rho ^{1-\beta }x^{k}}{%
\Gamma \left( \delta \right) }\frac{\sigma ^{\rho \left( \eta +1\right) -1}}{%
\left( x^{\rho }-\sigma ^{\rho }\right) ^{1-\delta }},$ where $\sigma \in
\left( 0,x\right) $, we obtain%
\begin{eqnarray}
&&\frac{\rho ^{1-\beta }x^{k}}{\Gamma \left( \delta \right) }\frac{\sigma
^{\rho \left( \eta +1\right) -1}}{\left( x^{\rho }-\sigma ^{\rho }\right)
^{1-\delta }}v\left( \sigma \right) \text{ }^{\rho }\mathcal{J}_{\eta
,k}^{\alpha ,\beta }z_{2}\left( x\right)  \notag \\
&&+\frac{\rho ^{1-\beta }x^{k}}{\Gamma \left( \delta \right) }\frac{\sigma
^{\rho \left( \eta +1\right) -1}}{\left( x^{\rho }-\sigma ^{\rho }\right)
^{1-\delta }}z_{1}\left( \sigma \right) \text{ }^{\rho }\mathcal{J}_{\eta
,k}^{\alpha ,\beta }v\left( x\right) \text{ \ \ \ \ \ \ \ \ \ \ \ \ \ \ \ \
\ \ \ \ \ \ \ \ \ \ \ \ \ \ }  \label{ineq1} \\
&&\left. \text{ }\geq \frac{\rho ^{1-\beta }x^{k}}{\Gamma \left( \delta
\right) }\frac{\sigma ^{\rho \left( \eta +1\right) -1}}{\left( x^{\rho
}-\sigma ^{\rho }\right) ^{1-\delta }}v\left( \sigma \right) \text{ }^{\rho }%
\mathcal{J}_{\eta ,k}^{\alpha ,\beta }\right.  \notag \\
&&\text{ \ }+\frac{\rho ^{1-\beta }x^{k}}{\Gamma \left( \delta \right) }%
\frac{\sigma ^{\rho \left( \eta +1\right) -1}}{\left( x^{\rho }-\sigma
^{\rho }\right) ^{1-\delta }}z_{1}\left( \sigma \right) \text{ }^{\rho }%
\mathcal{J}_{\eta ,k}^{\alpha ,\beta }z_{2}\left( x\right) .  \notag
\end{eqnarray}%
Integrating both sides of (\ref{ineq1}) with respect to $\sigma $ over $%
\left( 0,x\right) $, we get%
\begin{eqnarray*}
&&\text{ }^{\rho }\mathcal{J}_{\eta ,k}^{\alpha ,\beta }z_{2}\left( x\right)
\frac{\rho ^{1-\lambda }x^{k}}{\Gamma \left( \delta \right) }\int_{0}^{x}%
\frac{\sigma ^{\rho \left( \eta +1\right) -1}}{\left( x^{\rho }-\sigma
^{\rho }\right) ^{1-\delta }}v\left( \sigma \right) d\sigma \\
&&+\text{ }^{\rho }\mathcal{J}_{\eta ,k}^{\alpha ,\beta }v\left( x\right)
\frac{\rho ^{1-\lambda }x^{k}}{\Gamma \left( \delta \right) }\int_{0}^{x}%
\frac{\sigma ^{\rho \left( \eta +1\right) -1}}{\left( x^{\rho }-\sigma
^{\rho }\right) ^{1-\delta }}z_{1}\left( \sigma \right) d\sigma \text{ \ \ \
\ \ \ \ \ \ \ \ \ \ \ \ \ \ \ \ \ \ \ \ } \\
&&\left. \text{ }\geq \text{ }^{\rho }\mathcal{J}_{\eta ,k}^{\alpha ,\beta
}v\left( x\right) \frac{\rho ^{1-\lambda }x^{k}}{\Gamma \left( \delta
\right) }\int_{0}^{x}\frac{\sigma ^{\rho \left( \eta +1\right) -1}}{\left(
x^{\rho }-\sigma ^{\rho }\right) ^{1-\delta }}v\left( \sigma \right) d\sigma
\right. \\
&&\text{ \ }+\text{ }^{\rho }\mathcal{J}_{\eta ,k}^{\alpha ,\beta
}z_{2}\left( x\right) \frac{\rho ^{1-\lambda }x^{k}}{\Gamma \left( \delta
\right) }\int_{0}^{x}\frac{\sigma ^{\rho \left( \eta +1\right) -1}}{\left(
x^{\rho }-\sigma ^{\rho }\right) ^{1-\delta }}z_{1}\left( \sigma \right)
d\sigma .
\end{eqnarray*}%
Hence%
\begin{eqnarray*}
&&\text{ }^{\rho }\mathcal{J}_{\eta ,k}^{\alpha ,\beta }z_{2}\left( x\right)
\text{ }^{\rho }\mathcal{J}_{\eta ,k}^{\delta ,\lambda }v\left( x\right) +%
\text{ }^{\rho }\mathcal{J}_{\eta ,k}^{\alpha ,\beta }v\left( x\right) \text{
}^{\rho }\mathcal{J}_{\eta ,k}^{\delta ,\lambda }z_{1}\left( x\right) \\
&&\left. \text{ }\geq \text{ }^{\rho }\mathcal{J}_{\eta ,k}^{\alpha ,\beta
}v\left( x\right) \text{ }^{\rho }\mathcal{J}_{\eta ,k}^{\delta ,\lambda
}v\left( x\right) +\text{ }^{\rho }\mathcal{J}_{\eta ,k}^{\alpha ,\beta
}z_{2}\left( x\right) \text{ }^{\rho }\mathcal{J}_{\eta ,k}^{\delta ,\lambda
}z_{1}\left( x\right) \right. ,
\end{eqnarray*}%
which is inequality (\ref{inq66}).
\end{proof}

\begin{corollary}
\label{cor5} Let $z$ be an integrable function on $\left[ 0,\infty \right) $
satisfying $m\leq z\left( x\right) \leq M$, for all $x\in \left[ 0,\infty
\right) $ and $m,M\in
\mathbb{R}
$. Then, for all $x>0$ and $\alpha >0,$ $\rho >0,$ $\delta >0,$ $\beta ,\eta
,k,\lambda \in
\mathbb{R}
,$ we have%
\begin{eqnarray*}
&&M\Lambda _{x,k}^{\rho ,\beta }\left( \alpha ,\eta \right) \text{ }^{\rho }%
\mathcal{J}_{\eta ,k}^{\delta ,\lambda }v\left( x\right) +m\Lambda
_{x,k}^{\rho ,\lambda }\left( \delta ,\eta \right) \text{ }^{\rho }\mathcal{J%
}_{\eta ,k}^{\alpha ,\beta }v\left( x\right) \\
&&\left. \text{ }\geq \text{ }^{\rho }\mathcal{J}_{\eta ,k}^{\alpha ,\beta
}v\left( x\right) \text{ }^{\rho }\mathcal{J}_{\eta ,k}^{\delta ,\lambda
}v\left( x\right) +mM\Lambda _{x,k}^{\rho ,\beta }\left( \alpha ,\eta
\right) \Lambda _{x,k}^{\rho ,\lambda }\left( \delta ,\eta \right) \right. .
\end{eqnarray*}
\end{corollary}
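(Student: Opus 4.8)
The plan is to obtain the corollary as a direct specialization of Theorem \ref{thm5}, taking the two bounding functions to be the constants $z_{1}\left( x\right) \equiv m$ and $z_{2}\left( x\right) \equiv M$. These are plainly integrable on $\left[ 0,\infty \right) $ and satisfy (\ref{cond1}) with $v$ in the role of the function trapped between them, so the hypotheses of Theorem \ref{thm5} hold verbatim and its conclusion (\ref{inq66}) is available.

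The only genuine computation is the value of the generalized Katugampola integral of a constant. First I would evaluate $^{\rho }\mathcal{J}_{\eta ,k}^{\alpha ,\beta }\left[ c\right] \left( x\right) $ for $c\in \mathbb{R}$ by pulling $c$ out of the integral in Definition \ref{dif1} and substituting $u=\tau ^{\rho }$, $du=\rho \tau ^{\rho -1}d\tau $, which turns the remaining integral into
\begin{equation*}
\frac{1}{\rho }\int_{0}^{x^{\rho }}u^{\eta }\left( x^{\rho }-u\right) ^{\alpha -1}du=\frac{x^{\rho \left( \eta +\alpha \right) }}{\rho }B\left( \eta +1,\alpha \right) =\frac{x^{\rho \left( \eta +\alpha \right) }}{\rho }\frac{\Gamma \left( \eta +1\right) \Gamma \left( \alpha \right) }{\Gamma \left( \eta +\alpha +1\right) }.
\end{equation*}
Multiplying by the prefactor $\rho ^{1-\beta }x^{k}/\Gamma \left( \alpha \right) $ and comparing with the definition of $\Lambda $ yields the clean identity $^{\rho }\mathcal{J}_{\eta ,k}^{\alpha ,\beta }\left[ c\right] \left( x\right) =c\,\Lambda _{x,k}^{\rho ,\beta }\left( \alpha ,\eta \right) $, and likewise $^{\rho }\mathcal{J}_{\eta ,k}^{\delta ,\lambda }\left[ c\right] \left( x\right) =c\,\Lambda _{x,k}^{\rho ,\lambda }\left( \delta ,\eta \right) $.

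With this identity in hand, I would substitute $z_{1}=m$ and $z_{2}=M$ into (\ref{inq66}). The four fractional integrals of the bounding functions collapse to $^{\rho }\mathcal{J}_{\eta ,k}^{\alpha ,\beta }z_{2}\left( x\right) =M\Lambda _{x,k}^{\rho ,\beta }\left( \alpha ,\eta \right) $ and $^{\rho }\mathcal{J}_{\eta ,k}^{\delta ,\lambda }z_{1}\left( x\right) =m\Lambda _{x,k}^{\rho ,\lambda }\left( \delta ,\eta \right) $, while the integrals of $v$ are left untouched. Reading off the resulting inequality term by term reproduces exactly the claimed bound, the product $mM\Lambda _{x,k}^{\rho ,\beta }\left( \alpha ,\eta \right) \Lambda _{x,k}^{\rho ,\lambda }\left( \delta ,\eta \right) $ arising from the single cross term $^{\rho }\mathcal{J}_{\eta ,k}^{\alpha ,\beta }z_{2}\left( x\right) \,^{\rho }\mathcal{J}_{\eta ,k}^{\delta ,\lambda }z_{1}\left( x\right) $.

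There is no real obstacle here: once the constant-integral identity is established, the corollary is immediate from Theorem \ref{thm5}. The only point requiring a little care is the Beta-integral evaluation together with the exponent bookkeeping, namely checking that the power $k+\rho \left( \eta +\alpha \right) $ produced by the substitution matches the exponent in $\Lambda _{x,k}^{\rho ,\beta }\left( \alpha ,\eta \right) $; everything else is a direct substitution.
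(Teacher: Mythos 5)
Your proposal is correct and is exactly the route the paper intends: Corollary \ref{cor5} is stated immediately after Theorem \ref{thm5} as its specialization to the constant bounds $z_{1}\equiv m$, $z_{2}\equiv M$, with the identity $^{\rho }\mathcal{J}_{\eta ,k}^{\alpha ,\beta }\left[ c\right] \left( x\right) =c\,\Lambda _{x,k}^{\rho ,\beta }\left( \alpha ,\eta \right)$ (which the paper uses implicitly via its definition of $\Lambda$) doing all the work. Your Beta-function computation of that identity, including the exponent $k+\rho \left( \eta +\alpha \right)$, is the one detail the paper omits, and it checks out.
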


\begin{remark}
If we put $\eta =0,$ $k=0,$ and taking the limit $\rho \rightarrow 1,$ then
theorem (\ref{thm5}), reduces to theorem 2 and corollary (\ref{cor5}),
reduces to corollary 3 in \cite{ff}.
\end{remark}

Now we give the lemma required for proving our next theorem

\begin{lemma}
\label{LEM1} Let $v,z_{1},z_{2}$ are integrable functions on $\left[
0,\infty \right) $ satisfying the condition (\ref{cond1}) then for all $x>0$
and $\alpha >0,$ $\rho >0,$ $\beta ,\eta ,k\in
\mathbb{R}
,$ we have%
\begin{eqnarray}
&&\Lambda _{x,k}^{\rho ,\beta }\left( \alpha ,\eta \right) \text{ }^{\rho }%
\mathcal{J}_{\eta ,k}^{\alpha ,\beta }v^{2}\left( x\right) -\left( \text{ }%
^{\rho }\mathcal{J}_{\eta ,k}^{\alpha ,\beta }v\left( x\right) \right) ^{2}%
\text{ \ \ \ \ \ \ \ \ \ \ \ \ \ \ \ \ \ \ \ \ \ \ \ \ \ \ \ \ \ \ \ \ \ \ \
\ \ \ \ }  \notag \\
&&\left. =\left( \text{ }^{\rho }\mathcal{J}_{\eta ,k}^{\alpha ,\beta
}z_{2}\left( x\right) -\text{ }^{\rho }\mathcal{J}_{\eta ,k}^{\alpha ,\beta
}v\left( x\right) \right) \left( \text{ }^{\rho }\mathcal{J}_{\eta
,k}^{\alpha ,\beta }v\left( x\right) -\text{ }^{\rho }\mathcal{J}_{\eta
,k}^{\alpha ,\beta }z_{1}\left( x\right) \right) \right.  \notag \\
&&\text{ \ \ }-\Lambda _{x,k}^{\rho ,\beta }\left( \alpha ,\eta \right)
\text{ }^{\rho }\mathcal{J}_{\eta ,k}^{\alpha ,\beta }\left[ \left(
z_{2}\left( x\right) -v\left( x\right) \right) \left( v\left( x\right)
-z_{1}\left( x\right) \right) \right]  \notag \\
&&\text{ \ \ }+\Lambda _{x,k}^{\rho ,\beta }\left( \alpha ,\eta \right)
\text{ }^{\rho }\mathcal{J}_{\eta ,k}^{\alpha ,\beta }\left( z_{1}v\right)
\left( x\right) -\text{ }^{\rho }\mathcal{J}_{\eta ,k}^{\alpha ,\beta
}z_{1}\left( x\right) \text{ }^{\rho }\mathcal{J}_{\eta ,k}^{\alpha ,\beta
}v\left( x\right)  \label{id55} \\
&&\text{ \ \ }+\Lambda _{x,k}^{\rho ,\beta }\left( \alpha ,\eta \right)
\text{ }^{\rho }\mathcal{J}_{\eta ,k}^{\alpha ,\beta }\left( z_{2}v\right)
\left( x\right) -\text{ }^{\rho }\mathcal{J}_{\eta ,k}^{\alpha ,\beta
}z_{2}\left( x\right) \text{ }^{\rho }\mathcal{J}_{\eta ,k}^{\alpha ,\beta
}v\left( x\right)  \notag \\
&&\text{ \ \ }-\Lambda _{x,k}^{\rho ,\beta }\left( \alpha ,\eta \right)
\text{ }^{\rho }\mathcal{J}_{\eta ,k}^{\alpha ,\beta }\left(
z_{1}z_{2}\right) \left( x\right) +\text{ }^{\rho }\mathcal{J}_{\eta
,k}^{\alpha ,\beta }z_{2}\left( x\right) \text{ }^{\rho }\mathcal{J}_{\eta
,k}^{\alpha ,\beta }z_{1}\left( x\right) .  \notag
\end{eqnarray}
\end{lemma}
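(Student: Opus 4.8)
The plan is to prove (\ref{id55}) as a purely \emph{algebraic} identity that rests only on the linearity of the operator ${}^{\rho}\mathcal{J}_{\eta,k}^{\alpha,\beta}$; the ordering hypothesis (\ref{cond1}) is not actually used for the identity itself (it will matter only for the sign conclusions of the theorem that invokes this lemma). To keep the bookkeeping manageable I would abbreviate $J(w):={}^{\rho}\mathcal{J}_{\eta,k}^{\alpha,\beta}w(x)$ and $\Lambda:=\Lambda_{x,k}^{\rho,\beta}(\alpha,\eta)$ throughout, so that the claim reads $\Lambda\,J(v^{2})-(J(v))^{2}=\text{RHS}$.

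First I would expand the single genuinely ``new'' integrand appearing on the right-hand side, namely $J\!\left[(z_{2}-v)(v-z_{1})\right]$. Multiplying out the bracket gives $(z_{2}-v)(v-z_{1})=z_{2}v-z_{1}z_{2}-v^{2}+z_{1}v$, and linearity of $J$ then yields
\[
J\!\left[(z_{2}-v)(v-z_{1})\right]=J(z_{2}v)-J(z_{1}z_{2})-J(v^{2})+J(z_{1}v).
\]
In particular the term $-\Lambda\,J\!\left[(z_{2}-v)(v-z_{1})\right]$ contributes $+\Lambda\,J(v^{2})$ together with $-\Lambda\,J(z_{2}v)+\Lambda\,J(z_{1}z_{2})-\Lambda\,J(z_{1}v)$. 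Second, I would expand the product of two separate fractional integrals on the first line, obtaining $\left(J(z_{2})-J(v)\right)\left(J(v)-J(z_{1})\right)=J(z_{2})J(v)-J(z_{2})J(z_{1})-(J(v))^{2}+J(v)J(z_{1})$.

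Third, I would substitute these two expansions into the full right-hand side and collect like terms, splitting them into the ``cross'' terms (products of two distinct $J$'s) and the $\Lambda$-weighted single integrals. Among the cross terms I expect $J(z_{2})J(v)$, $J(z_{2})J(z_{1})$ and $J(v)J(z_{1})$ each to cancel in pairs (using commutativity of scalar multiplication), leaving only $-(J(v))^{2}$; among the $\Lambda$-weighted integrals I expect $\Lambda\,J(z_{2}v)$, $\Lambda\,J(z_{1}z_{2})$ and $\Lambda\,J(z_{1}v)$ to cancel in pairs, leaving only $+\Lambda\,J(v^{2})$. Summing the two survivors gives exactly $\Lambda\,J(v^{2})-(J(v))^{2}$, which is the left-hand side, completing the proof.

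As for the main obstacle: there is essentially no analytic difficulty here, since the result is a formal identity and the only real risk is a sign or bookkeeping slip while tracking the numerous $\Lambda$-weighted and cross terms. The one point worth flagging for clarity (though not logically required for the cancellation) is that $\Lambda=J(1)$, i.e.\ $\Lambda_{x,k}^{\rho,\beta}(\alpha,\eta)$ is the fractional integral of the constant function $1$; this is why $\Lambda$ occupies its particular position in (\ref{id55}) and is precisely what ties this lemma to the Korkine-type identity underlying Gr\"{u}ss inequalities.
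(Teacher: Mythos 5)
Your proof is correct, and it is genuinely different from the paper's. The paper establishes (\ref{id55}) by a Korkine-type double-integration argument: it starts from the pointwise two-variable identity (\ref{id56}), multiplies by the kernel $\frac{\rho ^{1-\beta }x^{k}}{\Gamma \left( \alpha \right) }\frac{\tau ^{\rho \left( \eta +1\right) -1}}{\left( x^{\rho }-\tau ^{\rho }\right) ^{1-\alpha }}$ and integrates in $\tau$ to obtain the intermediate identity (\ref{iden1}), then multiplies by the same kernel in $\sigma$ and integrates again, implicitly using that the integral of the kernel itself equals $\Lambda _{x,k}^{\rho ,\beta }\left( \alpha ,\eta \right)$, i.e.\ $\Lambda = {}^{\rho }\mathcal{J}_{\eta ,k}^{\alpha ,\beta }(1)$. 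You instead verify the identity by direct expansion using only linearity of the operator, and your bookkeeping checks out: among the products of two integrals, $J(z_{2})J(v)$, $J(z_{2})J(z_{1})$ and $J(v)J(z_{1})$ cancel in pairs leaving $-\left(J(v)\right)^{2}$, while among the $\Lambda$-weighted terms $\Lambda J(z_{2}v)$, $\Lambda J(z_{1}z_{2})$ and $\Lambda J(z_{1}v)$ cancel in pairs leaving $+\Lambda J(v^{2})$. Your route is shorter and exposes two facts the paper's derivation obscures: the hypothesis (\ref{cond1}) plays no role in the identity, and the identity is valid with $\Lambda$ replaced by an arbitrary scalar --- the evaluation $\Lambda = J(1)$ is never needed for the cancellation (it matters only for interpreting the term $-\Lambda\, J\!\left[(z_{2}-v)(v-z_{1})\right]$ as the ``discarded'' nonnegative piece in Theorem \ref{thm2}). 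What the paper's longer route buys is the intermediate identity (\ref{iden1}), which it explicitly recycles in the proof of Lemma \ref{lem2}, where (\ref{iden1}) is integrated against the second kernel with parameters $\delta ,\lambda$ to produce the mixed-parameter identity (\ref{id7}); under your approach, Lemma \ref{lem2} would need its own (equally routine) algebraic verification rather than falling out of a saved intermediate step.
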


\begin{proof}
For any $\tau ,\sigma >0,$ we have%
\begin{eqnarray}
&&\left( z_{2}\left( \sigma \right) -v\left( \sigma \right) \right) \left(
v\left( \tau \right) -z_{1}\left( \tau \right) \right) +\left( z_{2}\left(
\tau \right) -v\left( \tau \right) \right) \left( v\left( \sigma \right)
-z_{1}\left( \sigma \right) \right)  \notag \\
&&-\left( z_{2}\left( \tau \right) -v\left( \tau \right) \right) \left(
v\left( \tau \right) -z_{1}\left( \tau \right) \right) -\left( z_{2}\left(
\sigma \right) -v\left( \sigma \right) \right) \left( v\left( \sigma \right)
-z_{1}\left( \sigma \right) \right)  \notag \\
&&\left. =v^{2}\left( \tau \right) +v^{2}\left( \sigma \right) -2v\left(
\tau \right) v\left( \sigma \right) \right.  \notag \\
&&\text{ \ \ }+z_{2}\left( \sigma \right) v\left( \tau \right) +z_{1}\left(
\tau \right) v\left( \sigma \right) -z_{1}\left( \tau \right) z_{2}\left(
\sigma \right)  \label{id56} \\
&&\text{ \ \ }+z_{2}\left( \tau \right) v\left( \sigma \right) +z_{1}\left(
\sigma \right) v\left( \tau \right) -z_{1}\left( \sigma \right) z_{2}\left(
\tau \right)  \notag \\
&&\text{ \ \ }-z_{2}\left( \tau \right) v\left( \tau \right) +z_{1}\left(
\tau \right) z_{2}\left( \tau \right) -z_{1}\left( \tau \right) v\left( \tau
\right)  \notag \\
&&\text{ \ \ }-z_{2}\left( \sigma \right) v\left( \sigma \right)
+z_{1}\left( \sigma \right) z_{2}\left( \sigma \right) -z_{1}\left( \sigma
\right) v\left( \sigma \right) .  \notag
\end{eqnarray}%
Multiplying both sides of (\ref{id56}) by $\frac{\rho ^{1-\beta }x^{k}}{%
\Gamma \left( \alpha \right) }\frac{\tau ^{\rho \left( \eta +1\right) -1}}{%
\left( x^{\rho }-\tau ^{\rho }\right) ^{1-\alpha }},$ where $\tau \in \left(
0,x\right) $ and integrating over $\left( 0,x\right) $ with respect to the
variable $\tau ,$ we obtain%
\begin{eqnarray}
&&\left( z_{2}\left( \sigma \right) -v\left( \sigma \right) \right) \left(
\text{ }^{\rho }\mathcal{J}_{\eta ,k}^{\alpha ,\beta }v\left( x\right) -%
\text{ }^{\rho }\mathcal{J}_{\eta ,k}^{\alpha ,\beta }z_{1}\left( x\right)
\right)  \notag \\
&&+\left( v\left( \sigma \right) -z_{1}\left( \sigma \right) \right) \left(
\text{ }^{\rho }\mathcal{J}_{\eta ,k}^{\alpha ,\beta }z_{2}\left( x\right) -%
\text{ }^{\rho }\mathcal{J}_{\eta ,k}^{\alpha ,\beta }v\left( x\right)
\right)  \notag \\
&&-\text{ }^{\rho }\mathcal{J}_{\eta ,k}^{\alpha ,\beta }\left[ \left(
z_{2}\left( \tau \right) -v\left( \tau \right) \right) \left( v\left( \tau
\right) -z_{1}\left( \tau \right) \right) \right]  \notag \\
&&-\left[ \left( z_{2}\left( \sigma \right) -v\left( \sigma \right) \right)
\left( v\left( \sigma \right) -z_{1}\left( \sigma \right) \right) \right]
\Lambda _{x,k}^{\rho ,\beta }\left( \alpha ,\eta \right) \text{ \ \ \ \ }
\label{iden1} \\
&&\left. =\text{ }^{\rho }\mathcal{J}_{\eta ,k}^{\alpha ,\beta }v^{2}\left(
x\right) +v^{2}\left( \sigma \right) -2v\left( \sigma \right) \text{ }^{\rho
}\mathcal{J}_{\eta ,k}^{\alpha ,\beta }v\left( x\right) \right.  \notag \\
&&\text{ \ \ }+z_{2}\left( \sigma \right) \text{ }^{\rho }\mathcal{J}_{\eta
,k}^{\alpha ,\beta }v\left( x\right) +v\left( \sigma \right) \text{ }^{\rho }%
\mathcal{J}_{\eta ,k}^{\alpha ,\beta }z_{1}\left( x\right) -z_{2}\left(
\sigma \right) \text{ }^{\rho }\mathcal{J}_{\eta ,k}^{\alpha ,\beta
}z_{1}\left( x\right)  \notag \\
&&\text{ \ \ }+v\left( \sigma \right) \text{ }^{\rho }\mathcal{J}_{\eta
,k}^{\alpha ,\beta }z_{2}\left( x\right) +z_{1}\left( \sigma \right) \text{ }%
^{\rho }\mathcal{J}_{\eta ,k}^{\alpha ,\beta }v\left( x\right) -z_{1}\left(
\sigma \right) \text{ }^{\rho }\mathcal{J}_{\eta ,k}^{\alpha ,\beta
}z_{2}\left( x\right)  \notag \\
&&\text{ \ \ }-\text{ }^{\rho }\mathcal{J}_{\eta ,k}^{\alpha ,\beta }\left(
z_{2}v\right) \left( x\right) +\text{ }^{\rho }\mathcal{J}_{\eta ,k}^{\alpha
,\beta }\left( z_{1}z_{2}\right) \left( x\right) -\text{ }^{\rho }\mathcal{J}%
_{\eta ,k}^{\alpha ,\beta }\left( z_{1}v\right) \left( x\right)  \notag \\
&&\text{ \ \ }-\Lambda _{x,k}^{\rho ,\beta }\left( \alpha ,\eta \right)
z_{2}\left( \sigma \right) v\left( \sigma \right) +\Lambda _{x,k}^{\rho
,\beta }\left( \alpha ,\eta \right) z_{1}\left( \sigma \right) z_{2}\left(
\sigma \right) -\Lambda _{x,k}^{\rho ,\beta }\left( \alpha ,\eta \right)
z_{1}\left( \sigma \right) v\left( \sigma \right) .  \notag
\end{eqnarray}%
Now multiplying both sides of (\ref{iden1}) by $\frac{\rho ^{1-\beta }x^{k}}{%
\Gamma \left( \alpha \right) }\frac{\sigma ^{\rho \left( \eta +1\right) -1}}{%
\left( x^{\rho }-\sigma ^{\rho }\right) ^{1-\alpha }},$ where $\sigma \in
\left( 0,x\right) $ and integrating over $\left( 0,x\right) $ with respect
to the variable $\sigma ,$ we obtain%
\begin{eqnarray*}
&&\left( \text{ }^{\rho }\mathcal{J}_{\eta ,k}^{\alpha ,\beta }z_{2}\left(
x\right) -\text{ }^{\rho }\mathcal{J}_{\eta ,k}^{\alpha ,\beta }v\left(
x\right) \right) \left( \text{ }^{\rho }\mathcal{J}_{\eta ,k}^{\alpha ,\beta
}v\left( x\right) -\text{ }^{\rho }\mathcal{J}_{\eta ,k}^{\alpha ,\beta
}z_{1}\left( x\right) \right) \\
&&+\left( \text{ }^{\rho }\mathcal{J}_{\eta ,k}^{\alpha ,\beta }v\left(
x\right) -\text{ }^{\rho }\mathcal{J}_{\eta ,k}^{\alpha ,\beta }z_{1}\left(
x\right) \right) \left( \text{ }^{\rho }\mathcal{J}_{\eta ,k}^{\alpha ,\beta
}z_{2}\left( x\right) -\text{ }^{\rho }\mathcal{J}_{\eta ,k}^{\alpha ,\beta
}v\left( x\right) \right) \text{ \ \ \ \ \ \ \ \ \ \ \ \ \ \ \ \ \ \ \ \ \ \
\ \ \ \ \ \ \ \ \ \ \ \ \ \ \ \ } \\
&&-\text{ }^{\rho }\mathcal{J}_{\eta ,k}^{\alpha ,\beta }\left[ \left(
z_{2}\left( x\right) -v\left( x\right) \right) \left( v\left( x\right)
-z_{1}\left( x\right) \right) \right] \Lambda _{x,k}^{\rho ,\beta }\left(
\alpha ,\eta \right) \\
&&-\text{ }^{\rho }\mathcal{J}_{\eta ,k}^{\alpha ,\beta }\left[ \left(
z_{2}\left( x\right) -v\left( x\right) \right) \left( v\left( x\right)
-z_{1}\left( x\right) \right) \right] \Lambda _{x,k}^{\rho ,\beta }\left(
\alpha ,\eta \right) \\
&&\left. =\Lambda _{x,k}^{\rho ,\beta }\left( \alpha ,\eta \right) \text{ }%
^{\rho }\mathcal{J}_{\eta ,k}^{\alpha ,\beta }v^{2}\left( x\right) +\Lambda
_{x,k}^{\rho ,\beta }\left( \alpha ,\eta \right) \text{ }^{\rho }\mathcal{J}%
_{\eta ,k}^{\alpha ,\beta }v^{2}\left( x\right) \right. \\
&&\text{ \ \ }-2\text{ }^{\rho }\mathcal{J}_{\eta ,k}^{\alpha ,\beta
}v\left( x\right) \text{ }^{\rho }\mathcal{J}_{\eta ,k}^{\alpha ,\beta
}v\left( x\right) +\text{ }^{\rho }\mathcal{J}_{\eta ,k}^{\alpha ,\beta
}z_{2}\left( x\right) \text{ }^{\rho }\mathcal{J}_{\eta ,k}^{\alpha ,\beta
}v\left( x\right) \\
&&\text{ \ \ }+\text{ }^{\rho }\mathcal{J}_{\eta ,k}^{\alpha ,\beta }v\left(
x\right) \text{ }^{\rho }\mathcal{J}_{\eta ,k}^{\alpha ,\beta }z_{1}\left(
x\right) -\text{ }^{\rho }\mathcal{J}_{\eta ,k}^{\alpha ,\beta }z_{2}\left(
x\right) \text{ }^{\rho }\mathcal{J}_{\eta ,k}^{\alpha ,\beta }z_{1}\left(
x\right) \\
&&\text{ \ \ }+\text{ }^{\rho }\mathcal{J}_{\eta ,k}^{\alpha ,\beta }v\left(
x\right) \text{ }^{\rho }\mathcal{J}_{\eta ,k}^{\alpha ,\beta }z_{2}\left(
x\right) +\text{ }^{\rho }\mathcal{J}_{\eta ,k}^{\alpha ,\beta }z_{1}\left(
x\right) \text{ }^{\rho }\mathcal{J}_{\eta ,k}^{\alpha ,\beta }v\left(
x\right) \\
&&\text{ \ \ }-\text{ }^{\rho }\mathcal{J}_{\eta ,k}^{\alpha ,\beta
}z_{1}\left( x\right) \text{ }^{\rho }\mathcal{J}_{\eta ,k}^{\alpha ,\beta
}z_{2}\left( x\right) -\Lambda _{x,k}^{\rho ,\beta }\left( \alpha ,\eta
\right) \text{ }^{\rho }\mathcal{J}_{\eta ,k}^{\alpha ,\beta }\left(
z_{2}v\right) \left( x\right) \\
&&\text{ \ \ }+\Lambda _{x,k}^{\rho ,\beta }\left( \alpha ,\eta \right)
\text{ }^{\rho }\mathcal{J}_{\eta ,k}^{\alpha ,\beta }\left(
z_{1}z_{2}\right) \left( x\right) -\Lambda _{x,k}^{\rho ,\beta }\left(
\alpha ,\eta \right) \text{ }^{\rho }\mathcal{J}_{\eta ,k}^{\alpha ,\beta
}\left( z_{1}v\right) \left( x\right) \\
&&\text{ \ \ }-\Lambda _{x,k}^{\rho ,\beta }\left( \alpha ,\eta \right)
\text{ }^{\rho }\mathcal{J}_{\eta ,k}^{\alpha ,\beta }\left( z_{2}v\right)
\left( x\right) +\Lambda _{x,k}^{\rho ,\beta }\left( \alpha ,\eta \right)
\text{ }^{\rho }\mathcal{J}_{\eta ,k}^{\alpha ,\beta }\left(
z_{1}z_{2}\right) \left( x\right) \\
&&\text{ \ \ }-\Lambda _{x,k}^{\rho ,\beta }\left( \alpha ,\eta \right)
\text{ }^{\rho }\mathcal{J}_{\eta ,k}^{\alpha ,\beta }\left( z_{1}v\right)
\left( x\right) .
\end{eqnarray*}%
Which yields the required identity (\ref{id55}).
\end{proof}

Our next result is on Gruss type inequality in case of functional bounds
with same parameters

\begin{theorem}
\label{thm2} Let $v,u$ be two integrable functions on $\left[ 0,\infty
\right) .$ Suppose $z_{1},z_{2},\gamma _{1}$ and $\gamma _{2}$ be four
integrable functions on $\left[ 0,\infty \right) $ satisfying the condition
\begin{equation}
z_{1}\left( x\right) \leq v\left( x\right) \leq z_{2}\left( x\right) \ \ \ \
\ and\ \ \ \ \gamma _{1}\left( x\right) \leq u\left( x\right) \leq \gamma
_{2}\left( x\right) \ \ \ \ \ \forall x\in \left[ 0,\infty \right) .
\label{cond2}
\end{equation}%
Then for all $x>0$ and $\alpha >0,$ $\rho >0,$ $\beta ,\eta ,k\in
\mathbb{R}
,$ we have%
\begin{eqnarray}
&&\left[ \Lambda _{x,k}^{\rho ,\beta }\left( \alpha ,\eta \right) \text{ }%
^{\rho }\mathcal{J}_{\eta ,k}^{\alpha ,\beta }\left( vu\right) \left(
x\right) -\left( \text{ }^{\rho }\mathcal{J}_{\eta ,k}^{\alpha ,\beta
}v\left( x\right) \text{ }^{\rho }\mathcal{J}_{\eta ,k}^{\alpha ,\beta
}u\left( x\right) \right) \right] ^{2}  \notag \\
&&\left. \text{ }\leq T\left( v,z_{1},z_{2}\right) T\left( u,\gamma
_{1},\gamma _{2}\right) ,\right.  \label{inq8}
\end{eqnarray}%
where $T\left( \varphi ,\psi ,\omega \right) $ as in \cite{ff}, is defined by%
\begin{eqnarray*}
T\left( \varphi ,\psi ,\omega \right) &=&\left( \text{ }^{\rho }\mathcal{J}%
_{\eta ,k}^{\alpha ,\beta }\omega \left( x\right) -\text{ }^{\rho }\mathcal{J%
}_{\eta ,k}^{\alpha ,\beta }\varphi \left( x\right) \right) \left( \text{ }%
^{\rho }\mathcal{J}_{\eta ,k}^{\alpha ,\beta }\varphi \left( x\right) -\text{
}^{\rho }\mathcal{J}_{\eta ,k}^{\alpha ,\beta }\psi \left( x\right) \right)
\\
&&+\Lambda _{x,k}^{\rho ,\beta }\left( \alpha ,\eta \right) \text{ }^{\rho }%
\mathcal{J}_{\eta ,k}^{\alpha ,\beta }\left( \varphi \psi \right) \left(
x\right) -\text{ }^{\rho }\mathcal{J}_{\eta ,k}^{\alpha ,\beta }\varphi
\left( x\right) \text{ }^{\rho }\mathcal{J}_{\eta ,k}^{\alpha ,\beta }\psi
\left( x\right) \\
&&+\Lambda _{x,k}^{\rho ,\beta }\left( \alpha ,\eta \right) \text{ }^{\rho }%
\mathcal{J}_{\eta ,k}^{\alpha ,\beta }\left( \varphi \omega \right) \left(
x\right) -\text{ }^{\rho }\mathcal{J}_{\eta ,k}^{\alpha ,\beta }\varphi
\left( x\right) \text{ }^{\rho }\mathcal{J}_{\eta ,k}^{\alpha ,\beta }\omega
\left( x\right) \\
&&-\Lambda _{x,k}^{\rho ,\beta }\left( \alpha ,\eta \right) \text{ }^{\rho }%
\mathcal{J}_{\eta ,k}^{\alpha ,\beta }\left( \psi \omega \right) \left(
x\right) +\text{ }^{\rho }\mathcal{J}_{\eta ,k}^{\alpha ,\beta }\psi \left(
x\right) \text{ }^{\rho }\mathcal{J}_{\eta ,k}^{\alpha ,\beta }\omega \left(
x\right) .
\end{eqnarray*}
\end{theorem}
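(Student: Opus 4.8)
The plan is to read the bracketed quantity on the left of (\ref{inq8}) as a Chebyshev-type functional, bound its square by a Cauchy--Schwarz argument, and then use Lemma \ref{LEM1} to replace the resulting diagonal terms by $T(v,z_{1},z_{2})$ and $T(u,\gamma_{1},\gamma_{2})$. Write $K(x,\tau)=\frac{\rho^{1-\beta}x^{k}\,\tau^{\rho(\eta+1)-1}}{\Gamma(\alpha)(x^{\rho}-\tau^{\rho})^{1-\alpha}}$ for the kernel, so that ${}^{\rho}\mathcal{J}_{\eta,k}^{\alpha,\beta}w(x)=\int_{0}^{x}K(x,\tau)w(\tau)\,d\tau$ and, by the Beta-integral computation underlying the function $\Lambda$, $\Lambda_{x,k}^{\rho,\beta}(\alpha,\eta)={}^{\rho}\mathcal{J}_{\eta,k}^{\alpha,\beta}1(x)=\int_{0}^{x}K(x,\tau)\,d\tau$. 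For integrable $\varphi,w$ set
\[
\mathcal{T}(\varphi,w)=\Lambda_{x,k}^{\rho,\beta}(\alpha,\eta)\,{}^{\rho}\mathcal{J}_{\eta,k}^{\alpha,\beta}(\varphi w)(x)-{}^{\rho}\mathcal{J}_{\eta,k}^{\alpha,\beta}\varphi(x)\,{}^{\rho}\mathcal{J}_{\eta,k}^{\alpha,\beta}w(x),
\]
which is precisely the bracketed expression in (\ref{inq8}) for $\varphi=v$, $w=u$. First I would expand both products as double integrals over $(0,x)^{2}$ in the variables $\tau,\sigma$ and symmetrize to obtain the Korkine-type identity
\[
\mathcal{T}(\varphi,w)=\tfrac{1}{2}\int_{0}^{x}\!\int_{0}^{x}K(x,\tau)K(x,\sigma)\big(\varphi(\tau)-\varphi(\sigma)\big)\big(w(\tau)-w(\sigma)\big)\,d\tau\,d\sigma .
\]

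Since $\rho>0$ and $\alpha>0$ make $K\ge 0$ on $(0,x)$ (and $\Lambda_{x,k}^{\rho,\beta}(\alpha,\eta)>0$), the symmetric form $\mathcal{T}$ is an integration of $\varphi$ against $w$ with respect to the nonnegative product measure $K(x,\tau)K(x,\sigma)\,d\tau\,d\sigma$. Applying the Cauchy--Schwarz inequality for this measure to the factors $v(\tau)-v(\sigma)$ and $u(\tau)-u(\sigma)$ yields
\[
\big(\mathcal{T}(v,u)\big)^{2}\le \mathcal{T}(v,v)\,\mathcal{T}(u,u),
\]
where $\mathcal{T}(v,v)=\Lambda_{x,k}^{\rho,\beta}(\alpha,\eta)\,{}^{\rho}\mathcal{J}_{\eta,k}^{\alpha,\beta}v^{2}(x)-\big({}^{\rho}\mathcal{J}_{\eta,k}^{\alpha,\beta}v(x)\big)^{2}$ and likewise for $u$.

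It remains to pass from $\mathcal{T}(v,v)$ to $T(v,z_{1},z_{2})$. Comparing the definition of $T$ in the statement, read with $\varphi=v$, $\psi=z_{1}$, $\omega=z_{2}$, against identity (\ref{id55}) of Lemma \ref{LEM1}, every term matches save one, so that
\[
T(v,z_{1},z_{2})=\mathcal{T}(v,v)+\Lambda_{x,k}^{\rho,\beta}(\alpha,\eta)\,{}^{\rho}\mathcal{J}_{\eta,k}^{\alpha,\beta}\big[(z_{2}-v)(v-z_{1})\big](x).
\]
By the hypothesis (\ref{cond2}) we have $(z_{2}-v)(v-z_{1})\ge 0$, and since $K\ge 0$ and $\Lambda_{x,k}^{\rho,\beta}(\alpha,\eta)>0$ the added term is nonnegative; hence $\mathcal{T}(v,v)\le T(v,z_{1},z_{2})$, and the same argument gives $\mathcal{T}(u,u)\le T(u,\gamma_{1},\gamma_{2})$. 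Chaining these two bounds with the Cauchy--Schwarz estimate above delivers (\ref{inq8}).

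I expect the genuine work to lie in the Cauchy--Schwarz step: one must justify the symmetrized double-integral representation of $\mathcal{T}$ (Fubini on $(0,x)^{2}$) and verify that $K$ is nonnegative and integrable there, so that the product measure is a bona fide positive measure to which Cauchy--Schwarz applies. Once the Korkine identity and this positivity are secured, the reduction to $T$ through Lemma \ref{LEM1} is purely algebraic bookkeeping.
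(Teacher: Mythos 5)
Your proposal is correct and takes essentially the same approach as the paper: the paper likewise forms the Korkine-type double integral of $H(\tau ,\sigma )=\left( v(\tau )-v(\sigma )\right) \left( u(\tau )-u(\sigma )\right)$ against the product kernel (its identities (\ref{id5})--(\ref{id6})), applies the Cauchy--Schwarz inequality to it, and then uses Lemma \ref{LEM1} together with the nonnegativity of ${}^{\rho }\mathcal{J}_{\eta ,k}^{\alpha ,\beta }\left[ \left( z_{2}-v\right) \left( v-z_{1}\right) \right]$ to dominate the diagonal terms by $T\left( v,z_{1},z_{2}\right)$ and $T\left( u,\gamma _{1},\gamma _{2}\right)$. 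The only cosmetic difference is that you obtain the double-integral representation by direct symmetrization, whereas the paper builds it by two successive kernel multiplications and integrations.
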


\begin{proof}
Define
\begin{equation}
H\left( \tau ,\sigma \right) :=\left( \ v\left( \tau \right) -v\left( \sigma
\right) \right) \left( u\left( \tau \right) -u\left( \sigma \right) \right) ,%
\text{ \ \ \ \ \ }\tau ,\sigma \in \left( 0,x\right) ,x>0.  \label{cond3}
\end{equation}%
Multiplying both sides of (\ref{cond3}) by $\frac{\rho ^{1-\beta }x^{k}}{%
\Gamma \left( \alpha \right) }\frac{\tau ^{\rho \left( \eta +1\right) -1}}{%
\left( x^{\rho }-\tau ^{\rho }\right) ^{1-\alpha }},$ where $\tau \in \left(
0,x\right) $ and integrating over $\left( 0,x\right) $ with respect to the
variable $\tau ,$ we obtain%
\begin{eqnarray}
&&\frac{\rho ^{1-\beta }x^{k}}{\Gamma \left( \alpha \right) }\int_{0}^{x}%
\frac{\tau ^{\rho \left( \eta +1\right) -1}}{\left( x^{\rho }-\tau ^{\rho
}\right) ^{1-\alpha }}H\left( \tau ,\sigma \right) d\tau  \notag \\
&&\left. :=\text{ }^{\rho }\mathcal{J}_{\eta ,k}^{\alpha ,\beta }\left(
uv\right) \left( x\right) +\Lambda _{x,k}^{\rho ,\beta }\left( \alpha ,\eta
\right) v\left( \sigma \right) u\left( \sigma \right) \right.  \label{id5} \\
&&\text{ \ \ \ \ }-u\left( \sigma \right) \text{ }^{\rho }\mathcal{J}_{\eta
,k}^{\alpha ,\beta }v\left( x\right) -v\left( \sigma \right) \text{ }^{\rho }%
\mathcal{J}_{\eta ,k}^{\alpha ,\beta }u\left( x\right) .  \notag
\end{eqnarray}%
Now multiplying both sides of (\ref{id5}) by $\frac{\rho ^{1-\beta }x^{k}}{%
\Gamma \left( \alpha \right) }\frac{\sigma ^{\rho \left( \eta +1\right) -1}}{%
\left( x^{\rho }-\sigma ^{\rho }\right) ^{1-\alpha }},$ where $\sigma \in
\left( 0,x\right) $ and integrating the resulting identity over $\left(
0,x\right) $ with respect to the variable $\sigma ,$ we get%
\begin{eqnarray}
&&\frac{\rho ^{2\left( 1-\beta \right) }x^{2k}}{2\Gamma ^{2}\left( \alpha
\right) }\int_{0}^{x}\int_{0}^{x}\frac{\tau ^{\rho \left( \eta +1\right) -1}%
}{\left( x^{\rho }-\tau ^{\rho }\right) ^{1-\alpha }}\frac{\sigma ^{\rho
\left( \eta +1\right) -1}}{\left( x^{\rho }-\sigma ^{\rho }\right)
^{1-\alpha }}H\left( \tau ,\sigma \right) d\tau d\sigma  \notag \\
&&\left. :=\Lambda _{x,k}^{\rho ,\beta }\left( \alpha ,\eta \right) \text{ }%
^{\rho }\mathcal{J}_{\eta ,k}^{\alpha ,\beta }\left( uv\right) \left(
x\right) -\text{ }^{\rho }\mathcal{J}_{\eta ,k}^{\alpha ,\beta }u\left(
x\right) \text{ }^{\rho }\mathcal{J}_{\eta ,k}^{\alpha ,\beta }v\left(
x\right) \right. .  \label{id6}
\end{eqnarray}%
Applying the Cauchy-Schwarz inequality to (\ref{id6}), we can write%
\begin{eqnarray}
&&\left( \Lambda _{x,k}^{\rho ,\beta }\left( \alpha ,\eta \right) \text{ }%
^{\rho }\mathcal{J}_{\eta ,k}^{\alpha ,\beta }\left( uv\right) \left(
x\right) -\text{ }^{\rho }\mathcal{J}_{\eta ,k}^{\alpha ,\beta }u\left(
x\right) \text{ }^{\rho }\mathcal{J}_{\eta ,k}^{\alpha ,\beta }v\left(
x\right) \right) ^{2}  \notag \\
&&\left. \text{ }\leq \left( \Lambda _{x,k}^{\rho ,\beta }\left( \alpha
,\eta \right) \text{ }^{\rho }\mathcal{J}_{\eta ,k}^{\alpha ,\beta
}u^{2}\left( x\right) -\left( \text{ }^{\rho }\mathcal{J}_{\eta ,k}^{\alpha
,\beta }u\left( x\right) \right) ^{2}\right) \right.  \label{inq} \\
&&\text{ \ \ }\times \left( \Lambda _{x,k}^{\rho ,\beta }\left( \alpha ,\eta
\right) \text{ }^{\rho }\mathcal{J}_{\eta ,k}^{\alpha ,\beta }v^{2}\left(
x\right) -\left( \text{ }^{\rho }\mathcal{J}_{\eta ,k}^{\alpha ,\beta
}v\left( x\right) \right) ^{2}\right) .  \notag
\end{eqnarray}%
Since
\begin{eqnarray}
\left( z_{2}\left( x\right) -v\left( x\right) \right) \left( v\left(
x\right) -z_{1}\left( x\right) \right) &\geq &0,\text{\ }  \notag \\
\left( \gamma _{2}\left( x\right) -u\left( x\right) \right) \left( u\left(
x\right) -\gamma _{1}\left( x\right) \right) &\geq &0,  \label{inq3}
\end{eqnarray}%
\ for all $x\in \left[ 0,\infty \right) ,$ we have%
\begin{equation*}
\Lambda _{x,k}^{\rho ,\beta }\left( \alpha ,\eta \right) \text{ }^{\rho }%
\mathcal{J}_{\eta ,k}^{\alpha ,\beta }\left( z_{2}\left( x\right) -v\left(
x\right) \right) \left( v\left( x\right) -z_{1}\left( x\right) \right) \geq 0
\end{equation*}%
and%
\begin{equation*}
\Lambda _{x,k}^{\rho ,\beta }\left( \alpha ,\eta \right) \text{ }^{\rho }%
\mathcal{J}_{\eta ,k}^{\alpha ,\beta }\left( \gamma _{2}\left( x\right)
-u\left( x\right) \right) \left( u\left( x\right) -\gamma _{1}\left(
x\right) \right) \geq 0.
\end{equation*}%
Thus, from lemma (\ref{LEM1}), we have%
\begin{eqnarray}
&&\Lambda _{x,k}^{\rho ,\beta }\left( \alpha ,\eta \right) \text{ }^{\rho }%
\mathcal{J}_{\eta ,k}^{\alpha ,\beta }v^{2}\left( x\right) -\left( \text{ }%
^{\rho }\mathcal{J}_{\eta ,k}^{\alpha ,\beta }v\left( x\right) \right) ^{2}
\notag \\
&&\left. \text{ }\leq \left( \text{ }^{\rho }\mathcal{J}_{\eta ,k}^{\alpha
,\beta }z_{2}\left( x\right) -\text{ }^{\rho }\mathcal{J}_{\eta ,k}^{\alpha
,\beta }v\left( x\right) \right) \left( \text{ }^{\rho }\mathcal{J}_{\eta
,k}^{\alpha ,\beta }v\left( x\right) -\text{ }^{\rho }\mathcal{J}_{\eta
,k}^{\alpha ,\beta }z_{1}\left( x\right) \right) \right.  \notag \\
&&\text{ \ \ }+\Lambda _{x,k}^{\rho ,\beta }\left( \alpha ,\eta \right)
\text{ }^{\rho }\mathcal{J}_{\eta ,k}^{\alpha ,\beta }\left( z_{1}v\right)
\left( x\right) -\text{ }^{\rho }\mathcal{J}_{\eta ,k}^{\alpha ,\beta
}z_{1}\left( x\right) \text{ }^{\rho }\mathcal{J}_{\eta ,k}^{\alpha ,\beta
}v\left( x\right)  \label{inq6} \\
&&\text{ \ \ }+\Lambda _{x,k}^{\rho ,\beta }\left( \alpha ,\eta \right)
\text{ }^{\rho }\mathcal{J}_{\eta ,k}^{\alpha ,\beta }\left( z_{2}v\right)
\left( x\right) -\text{ }^{\rho }\mathcal{J}_{\eta ,k}^{\alpha ,\beta
}z_{2}\left( x\right) \text{ }^{\rho }\mathcal{J}_{\eta ,k}^{\alpha ,\beta
}v\left( x\right)  \notag \\
&&\text{ \ \ }-\Lambda _{x,k}^{\rho ,\beta }\left( \alpha ,\eta \right)
\text{ }^{\rho }\mathcal{J}_{\eta ,k}^{\alpha ,\beta }\left(
z_{1}z_{2}\right) \left( x\right) +\text{ }^{\rho }\mathcal{J}_{\eta
,k}^{\alpha ,\beta }z_{2}\left( x\right) \text{ }^{\rho }\mathcal{J}_{\eta
,k}^{\alpha ,\beta }z_{1}\left( x\right)  \notag \\
&&\left. \text{ \ \ \ }=T\left( v,z_{1},z_{2}\right) \right.  \notag
\end{eqnarray}%
and%
\begin{eqnarray}
&&\Lambda _{x,k}^{\rho ,\beta }\left( \alpha ,\eta \right) \text{ }^{\rho }%
\mathcal{J}_{\eta ,k}^{\alpha ,\beta }u^{2}\left( x\right) -\left( \text{ }%
^{\rho }\mathcal{J}_{\eta ,k}^{\alpha ,\beta }u\left( x\right) \right) ^{2}
\notag \\
&&\left. \text{ }\leq \left( \text{ }^{\rho }\mathcal{J}_{\eta ,k}^{\alpha
,\beta }\gamma _{2}\left( x\right) -\text{ }^{\rho }\mathcal{J}_{\eta
,k}^{\alpha ,\beta }u\left( x\right) \right) \left( \text{ }^{\rho }\mathcal{%
J}_{\eta ,k}^{\alpha ,\beta }u\left( x\right) -\text{ }^{\rho }\mathcal{J}%
_{\eta ,k}^{\alpha ,\beta }\gamma _{1}\left( x\right) \right) \right.  \notag
\\
&&\text{ \ \ }+\Lambda _{x,k}^{\rho ,\beta }\left( \alpha ,\eta \right)
\text{ }^{\rho }\mathcal{J}_{\eta ,k}^{\alpha ,\beta }\left( \gamma
_{1}u\right) \left( x\right) -\text{ }^{\rho }\mathcal{J}_{\eta ,k}^{\alpha
,\beta }\gamma _{1}\left( x\right) \text{ }^{\rho }\mathcal{J}_{\eta
,k}^{\alpha ,\beta }u\left( x\right)  \label{inq7} \\
&&\text{ \ \ }+\Lambda _{x,k}^{\rho ,\beta }\left( \alpha ,\eta \right)
\text{ }^{\rho }\mathcal{J}_{\eta ,k}^{\alpha ,\beta }\left( \gamma
_{2}u\right) \left( x\right) -\text{ }^{\rho }\mathcal{J}_{\eta ,k}^{\alpha
,\beta }\gamma _{2}\left( x\right) \text{ }^{\rho }\mathcal{J}_{\eta
,k}^{\alpha ,\beta }u\left( x\right)  \notag \\
&&\text{ \ \ }-\Lambda _{x,k}^{\rho ,\beta }\left( \alpha ,\eta \right)
\text{ }^{\rho }\mathcal{J}_{\eta ,k}^{\alpha ,\beta }\left( \gamma
_{1}\gamma _{2}\right) \left( x\right) +\text{ }^{\rho }\mathcal{J}_{\eta
,k}^{\alpha ,\beta }\gamma _{2}\left( x\right) \text{ }^{\rho }\mathcal{J}%
_{\eta ,k}^{\alpha ,\beta }\gamma _{1}\left( x\right)  \notag \\
&&\left. \text{ \ \ \ }=T\left( u,\gamma _{1},\gamma _{2}\right) \right. .
\notag
\end{eqnarray}%
Combining the Inequalities (\ref{inq6}), (\ref{inq7}) with inequality (\ref%
{inq}), we obtain inequality (\ref{inq8}).
\end{proof}

\begin{remark}
If we put $T\left( v,z_{1},z_{2}\right) =T\left( v,m,M\right) $ $\ \ \ $and $%
\ \ \ T\left( u,\gamma _{1},\gamma _{2}\right) =T\left( v,p,P\right) ,$ in
theorem (\ref{thm2}), where $m,M,p,P$ are constants, then inequality (\ref%
{inq8}) reduces to
\begin{eqnarray*}
&&\left\vert \Lambda _{x,k}^{\rho ,\beta }\left( \alpha ,\eta \right) \text{
}^{\rho }\mathcal{J}_{\eta ,k}^{\alpha ,\beta }\left( vu\right) \left(
x\right) -\left( \text{ }^{\rho }\mathcal{J}_{\eta ,k}^{\alpha ,\beta
}v\left( x\right) \text{ }^{\rho }\mathcal{J}_{\eta ,k}^{\alpha ,\beta
}u\left( x\right) \right) \right\vert \\
&&\left. \text{ }\leq \left( \Lambda _{x,k}^{\rho ,\beta }\left( \alpha
,\eta \right) \right) ^{2}\left( M-m\right) \left( P-p\right) \right. .
\end{eqnarray*}%
Which is result given in \cite{gg}.
\end{remark}

\begin{lemma}
\label{lem2} Let $v,z_{1},z_{2}$ are integrable functions on $\left[
0,\infty \right) $ satisfying the condition (\ref{cond1}), then for all $x>0$
and $\alpha >0,$ $\delta >0,$ $\rho >0,$ $\beta ,\lambda ,\eta ,k\in
\mathbb{R}
,$ we have%
\begin{eqnarray}
&&\Lambda _{x,k}^{\rho ,\lambda }\left( \delta ,\eta \right) \text{ }^{\rho }%
\mathcal{J}_{\eta ,k}^{\alpha ,\beta }v^{2}\left( x\right) +\text{ }^{\rho }%
\mathcal{J}_{\eta ,k}^{\delta ,\lambda }v^{2}\left( x\right) -2\text{ }%
^{\rho }\mathcal{J}_{\eta ,k}^{\delta ,\lambda }v\left( x\right) \text{ }%
^{\rho }\mathcal{J}_{\eta ,k}^{\alpha ,\beta }v\left( x\right)  \notag \\
&&\left. =\left( \text{ }^{\rho }\mathcal{J}_{\eta ,k}^{\delta ,\lambda
}z_{2}\left( x\right) -\text{ }^{\rho }\mathcal{J}_{\eta ,k}^{\delta
,\lambda }v\left( x\right) \right) \left( \text{ }^{\rho }\mathcal{J}_{\eta
,k}^{\alpha ,\beta }v\left( x\right) -\text{ }^{\rho }\mathcal{J}_{\eta
,k}^{\alpha ,\beta }z_{1}\left( x\right) \right) \right.  \notag \\
&&\text{ \ \ \ }+\left( \text{ }^{\rho }\mathcal{J}_{\eta ,k}^{\delta
,\lambda }v\left( x\right) -\text{ }^{\rho }\mathcal{J}_{\eta ,k}^{\delta
,\lambda }z_{1}\left( x\right) \right) \left( \text{ }^{\rho }\mathcal{J}%
_{\eta ,k}^{\alpha ,\beta }z_{2}\left( x\right) -\text{ }^{\rho }\mathcal{J}%
_{\eta ,k}^{\alpha ,\beta }v\left( x\right) \right)  \notag \\
&&\text{ \ \ \ }-\text{ }^{\rho }\mathcal{J}_{\eta ,k}^{\alpha ,\beta }\left[
\left( z_{2}\left( \tau \right) -v\left( \tau \right) \right) \left( v\left(
\tau \right) -z_{1}\left( \tau \right) \right) \right] \Lambda _{x,k}^{\rho
,\lambda }\left( \delta ,\eta \right)  \notag \\
&&\text{ \ \ \ }-\text{ }^{\rho }\mathcal{J}_{\eta ,k}^{\delta ,\lambda }%
\left[ \left( z_{2}\left( \sigma \right) -v\left( \sigma \right) \right)
\left( v\left( \sigma \right) -z_{1}\left( \sigma \right) \right) \right]
\Lambda _{x,k}^{\rho ,\beta }\left( \alpha ,\eta \right)  \notag \\
&&\text{ \ \ \ }-\text{ }^{\rho }\mathcal{J}_{\eta ,k}^{\delta ,\lambda
}z_{2}\left( x\right) \text{ }^{\rho }\mathcal{J}_{\eta ,k}^{\alpha ,\beta
}v\left( x\right) -\text{ }^{\rho }\mathcal{J}_{\eta ,k}^{\delta ,\lambda
}v\left( x\right) \text{ }^{\rho }\mathcal{J}_{\eta ,k}^{\alpha ,\beta
}z_{2}\left( x\right)  \label{id7} \\
&&\text{ \ \ \ }-\text{ }^{\rho }\mathcal{J}_{\eta ,k}^{\delta ,\lambda
}v\left( x\right) \text{ }^{\rho }\mathcal{J}_{\eta ,k}^{\alpha ,\beta
}z_{1}\left( x\right) -\text{ }^{\rho }\mathcal{J}_{\eta ,k}^{\delta
,\lambda }z_{1}\left( x\right) \text{ }^{\rho }\mathcal{J}_{\eta ,k}^{\alpha
,\beta }v\left( x\right)  \notag \\
&&\text{ \ \ \ }+\text{ }^{\rho }\mathcal{J}_{\eta ,k}^{\delta ,\lambda
}z_{2}\left( x\right) \text{ }^{\rho }\mathcal{J}_{\eta ,k}^{\alpha ,\beta
}z_{1}\left( x\right) +\text{ }^{\rho }\mathcal{J}_{\eta ,k}^{\delta
,\lambda }z_{1}\left( x\right) \text{ }^{\rho }\mathcal{J}_{\eta ,k}^{\alpha
,\beta }z_{2}\left( x\right)  \notag \\
&&\text{ \ \ \ }+\Lambda _{x,k}^{\rho ,\lambda }\left( \delta ,\eta \right) %
\left[ \text{ }^{\rho }\mathcal{J}_{\eta ,k}^{\alpha ,\beta }\left(
z_{1}v\right) \left( x\right) +\text{ }^{\rho }\mathcal{J}_{\eta ,k}^{\alpha
,\beta }\left( z_{2}v\right) \left( x\right) -\text{ }^{\rho }\mathcal{J}%
_{\eta ,k}^{\alpha ,\beta }\left( z_{1}z_{2}\right) \left( x\right) \right]
\notag \\
&&\text{ \ \ \ }+\Lambda _{x,k}^{\rho ,\beta }\left( \alpha ,\eta \right) %
\left[ \text{ }^{\rho }\mathcal{J}_{\eta ,k}^{\delta ,\lambda }\left(
z_{1}v\right) \left( x\right) +\text{ }^{\rho }\mathcal{J}_{\eta ,k}^{\delta
,\lambda }\left( z_{2}v\right) \left( x\right) -\text{ }^{\rho }\mathcal{J}%
_{\eta ,k}^{\delta ,\lambda }\left( z_{1}z_{2}\right) \left( x\right) \right]
.  \notag
\end{eqnarray}
\end{lemma}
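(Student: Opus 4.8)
The plan is to reproduce the two-fold integration scheme used for Lemma~\ref{LEM1}, but to break the symmetry between the two dummy variables: I would integrate in $\tau$ against the kernel carrying the parameters $(\alpha,\beta)$ and in $\sigma$ against the kernel carrying the parameters $(\delta,\lambda)$. The starting point is once more the purely pointwise algebraic identity \ref{id56}, valid for all $\tau,\sigma>0$ and containing no fractional integration whatsoever; the whole content of the lemma is extracted from it by applying two \emph{different} Katugampola operators in succession. Throughout I would use the elementary fact that the operator sends the constant function $1$ to the weight $\Lambda$, namely ${}^{\rho}\mathcal{J}_{\eta,k}^{\alpha,\beta}[1](x)=\Lambda_{x,k}^{\rho,\beta}(\alpha,\eta)$ and ${}^{\rho}\mathcal{J}_{\eta,k}^{\delta,\lambda}[1](x)=\Lambda_{x,k}^{\rho,\lambda}(\delta,\eta)$ (an elementary Beta-integral, already implicit in Corollary~\ref{cor5}).

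Concretely, I would first multiply both sides of \ref{id56} by $\frac{\rho^{1-\beta}x^{k}}{\Gamma(\alpha)}\frac{\tau^{\rho(\eta+1)-1}}{(x^{\rho}-\tau^{\rho})^{1-\alpha}}$ and integrate in $\tau$ over $(0,x)$. Each term depending on $\tau$ becomes the corresponding ${}^{\rho}\mathcal{J}_{\eta,k}^{\alpha,\beta}$ integral, while each factor depending on $\sigma$ alone is constant in $\tau$ and so is simply multiplied by $\Lambda_{x,k}^{\rho,\beta}(\alpha,\eta)$; this is exactly the intermediate identity \ref{iden1}. I would then multiply the result by the second kernel $\frac{\rho^{1-\lambda}x^{k}}{\Gamma(\delta)}\frac{\sigma^{\rho(\eta+1)-1}}{(x^{\rho}-\sigma^{\rho})^{1-\delta}}$ and integrate in $\sigma$ over $(0,x)$, now letting the ${}^{\rho}\mathcal{J}_{\eta,k}^{\delta,\lambda}$ operator act on the $\sigma$-dependent factors and attaching the weight $\Lambda_{x,k}^{\rho,\lambda}(\delta,\eta)$ to those that are constant in $\sigma$. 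Gathering the terms produces the identity \ref{id7}, with no inequality ever invoked: the statement is a pure algebraic identity between mixed fractional integrals.

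The one genuinely delicate point, and the main obstacle, is the asymmetric bookkeeping. In Lemma~\ref{LEM1} the two integrations carried identical parameters, so $\tau$ and $\sigma$ were interchangeable and many terms coalesced in pairs, generating the overall factor $2$ that is then divided out. Here the two operators are genuinely different, so that coalescence is lost: for example the two pointwise cross products $\bigl(z_{2}(\sigma)-v(\sigma)\bigr)\bigl(v(\tau)-z_{1}(\tau)\bigr)$ and $\bigl(z_{2}(\tau)-v(\tau)\bigr)\bigl(v(\sigma)-z_{1}(\sigma)\bigr)$ no longer coincide after integration but split into the two \emph{distinct} mixed products $\bigl({}^{\rho}\mathcal{J}_{\eta,k}^{\delta,\lambda}z_{2}-{}^{\rho}\mathcal{J}_{\eta,k}^{\delta,\lambda}v\bigr)\bigl({}^{\rho}\mathcal{J}_{\eta,k}^{\alpha,\beta}v-{}^{\rho}\mathcal{J}_{\eta,k}^{\alpha,\beta}z_{1}\bigr)$ and $\bigl({}^{\rho}\mathcal{J}_{\eta,k}^{\delta,\lambda}v-{}^{\rho}\mathcal{J}_{\eta,k}^{\delta,\lambda}z_{1}\bigr)\bigl({}^{\rho}\mathcal{J}_{\eta,k}^{\alpha,\beta}z_{2}-{}^{\rho}\mathcal{J}_{\eta,k}^{\alpha,\beta}v\bigr)$ recorded on the right of \ref{id7}. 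Consequently every monomial in \ref{id56} must be classified individually---does it pick up a ${}^{\rho}\mathcal{J}_{\eta,k}^{\alpha,\beta}$, a ${}^{\rho}\mathcal{J}_{\eta,k}^{\delta,\lambda}$, a weight $\Lambda_{x,k}^{\rho,\beta}(\alpha,\eta)$, a weight $\Lambda_{x,k}^{\rho,\lambda}(\delta,\eta)$, or a product of two of these---and there is now no symmetry to exploit and no final division by $2$. Once this term-by-term accounting is carried out faithfully, the stated identity \ref{id7} follows.
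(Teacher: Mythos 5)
Your proposal is correct and is essentially identical to the paper's own proof: the paper likewise takes the intermediate identity (\ref{iden1}) (obtained from the pointwise identity (\ref{id56}) by applying the $(\alpha,\beta)$-kernel in $\tau$) and then multiplies it by the second kernel $\frac{\rho^{1-\lambda}x^{k}}{\Gamma(\delta)}\frac{\sigma^{\rho(\eta+1)-1}}{(x^{\rho}-\sigma^{\rho})^{1-\delta}}$ and integrates in $\sigma$ over $(0,x)$, exactly as you describe. Your observations about ${}^{\rho}\mathcal{J}_{\eta,k}^{\delta,\lambda}[1](x)=\Lambda_{x,k}^{\rho,\lambda}(\delta,\eta)$ and the loss of the $\tau\leftrightarrow\sigma$ symmetry (hence no factor $2$ and two distinct cross products) are precisely the bookkeeping the paper carries out.
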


\begin{proof}
In lemma (\ref{LEM1}), multiplying both sides of (\ref{iden1}) by $\frac{%
\rho ^{1-\lambda }x^{k}}{\Gamma \left( \delta \right) }\frac{\sigma ^{\rho
\left( \eta +1\right) -1}}{\left( x^{\rho }-\sigma ^{\rho }\right)
^{1-\delta }},$ where $\sigma \in \left( 0,x\right) $ and integrating the
resulting identity over $\left( 0,x\right) $ with respect to the variable $%
\sigma ,$ we obtain%
\begin{eqnarray*}
&&\left( \text{ }^{\rho }\mathcal{J}_{\eta ,k}^{\delta ,\lambda }z_{2}\left(
x\right) -\text{ }^{\rho }\mathcal{J}_{\eta ,k}^{\delta ,\lambda }v\left(
x\right) \right) \left( \text{ }^{\rho }\mathcal{J}_{\eta ,k}^{\alpha ,\beta
}v\left( x\right) -\text{ }^{\rho }\mathcal{J}_{\eta ,k}^{\alpha ,\beta
}z_{1}\left( x\right) \right) \\
&&+\left( \text{ }^{\rho }\mathcal{J}_{\eta ,k}^{\delta ,\lambda }v\left(
x\right) -\text{ }^{\rho }\mathcal{J}_{\eta ,k}^{\delta ,\lambda
}z_{1}\left( x\right) \right) \left( \text{ }^{\rho }\mathcal{J}_{\eta
,k}^{\alpha ,\beta }z_{2}\left( x\right) -\text{ }^{\rho }\mathcal{J}_{\eta
,k}^{\alpha ,\beta }v\left( x\right) \right) \\
&&-\text{ }^{\rho }\mathcal{J}_{\eta ,k}^{\alpha ,\beta }\left[ \left(
z_{2}\left( \tau \right) -v\left( \tau \right) \right) \left( v\left( \tau
\right) -z_{1}\left( \tau \right) \right) \right] \Lambda _{x,k}^{\rho
,\lambda }\left( \delta ,\eta \right) \\
&&-\text{ }^{\rho }\mathcal{J}_{\eta ,k}^{\delta ,\lambda }\left[ \left(
z_{2}\left( \sigma \right) -v\left( \sigma \right) \right) \left( v\left(
\sigma \right) -z_{1}\left( \sigma \right) \right) \right] \Lambda
_{x,k}^{\rho ,\beta }\left( \alpha ,\eta \right) \\
&&\left. =\Lambda _{x,k}^{\rho ,\lambda }\left( \delta ,\eta \right) \text{ }%
^{\rho }\mathcal{J}_{\eta ,k}^{\alpha ,\beta }v^{2}\left( x\right) +\text{ }%
^{\rho }\mathcal{J}_{\eta ,k}^{\delta ,\lambda }v^{2}\left( x\right) -2\text{
}^{\rho }\mathcal{J}_{\eta ,k}^{\delta ,\lambda }v\left( x\right) \text{ }%
^{\rho }\mathcal{J}_{\eta ,k}^{\alpha ,\beta }v\left( x\right) \right. \\
&&\text{ \ \ \ }+\text{ }^{\rho }\mathcal{J}_{\eta ,k}^{\delta ,\lambda
}z_{2}\left( x\right) \text{ }^{\rho }\mathcal{J}_{\eta ,k}^{\alpha ,\beta
}v\left( x\right) +\text{ }^{\rho }\mathcal{J}_{\eta ,k}^{\delta ,\lambda
}v\left( x\right) \text{ }^{\rho }\mathcal{J}_{\eta ,k}^{\alpha ,\beta
}z_{1}\left( x\right) \\
&&\text{ \ \ \ }-\text{ }^{\rho }\mathcal{J}_{\eta ,k}^{\delta ,\lambda
}z_{2}\left( x\right) \text{ }^{\rho }\mathcal{J}_{\eta ,k}^{\alpha ,\beta
}z_{1}\left( x\right) +\text{ }^{\rho }\mathcal{J}_{\eta ,k}^{\delta
,\lambda }v\left( x\right) \text{ }^{\rho }\mathcal{J}_{\eta ,k}^{\alpha
,\beta }z_{2}\left( x\right) \\
&&\text{ \ \ \ }+\text{ }^{\rho }\mathcal{J}_{\eta ,k}^{\delta ,\lambda
}z_{1}\left( x\right) \text{ }^{\rho }\mathcal{J}_{\eta ,k}^{\alpha ,\beta
}v\left( x\right) -\text{ }^{\rho }\mathcal{J}_{\eta ,k}^{\delta ,\lambda
}z_{1}\left( x\right) \text{ }^{\rho }\mathcal{J}_{\eta ,k}^{\alpha ,\beta
}z_{2}\left( x\right) \\
&&\text{ \ \ \ }-\Lambda _{x,k}^{\rho ,\lambda }\left( \delta ,\eta \right)
\text{ }^{\rho }\mathcal{J}_{\eta ,k}^{\alpha ,\beta }\left( z_{2}v\right)
\left( x\right) +\Lambda _{x,k}^{\rho ,\lambda }\left( \delta ,\eta \right)
\text{ }^{\rho }\mathcal{J}_{\eta ,k}^{\alpha ,\beta }\left(
z_{1}z_{2}\right) \left( x\right) \\
&&\text{ \ \ \ }-\Lambda _{x,k}^{\rho ,\lambda }\left( \delta ,\eta \right)
\text{ }^{\rho }\mathcal{J}_{\eta ,k}^{\alpha ,\beta }\left( z_{1}v\right)
\left( x\right) -\Lambda _{x,k}^{\rho ,\beta }\left( \alpha ,\eta \right)
\text{ }^{\rho }\mathcal{J}_{\eta ,k}^{\delta ,\lambda }\left( z_{2}v\right)
\left( x\right) \\
&&\text{ \ \ \ }+\Lambda _{x,k}^{\rho ,\beta }\left( \alpha ,\eta \right)
\text{ }^{\rho }\mathcal{J}_{\eta ,k}^{\delta ,\lambda }z_{1}z_{2}\left(
x\right) -\Lambda _{x,k}^{\rho ,\beta }\left( \alpha ,\eta \right) \text{ }%
^{\rho }\mathcal{J}_{\eta ,k}^{\delta ,\lambda }z_{1}v\left( x\right) .
\end{eqnarray*}%
Which gives (\ref{id7}) and proves the lemma.
\end{proof}

In our next theorem we prove the result with different parameters. Here we
use our lemma (\ref{lem2}) to proving the result.

\begin{theorem}
Let $v,u$ be two integrable functions on $\left[ 0,\infty \right) $ and
suppose $z_{1},z_{2},\gamma _{1}$ and $\gamma _{2}$ be four integrable
functions on $\left[ 0,\infty \right) $ satisfying the condition (\ref{cond2}%
), then for all $x>0$ and $\alpha >0,$ $\delta >0,$ $\rho >0,$ $\beta
,\lambda ,\eta ,k\in
\mathbb{R}
,$ we have%
\begin{eqnarray}
&&\left\vert \Lambda _{x,k}^{\rho ,\lambda }\left( \delta ,\eta \right)
\text{ }^{\rho }\mathcal{J}_{\eta ,k}^{\alpha ,\beta }\left( uv\right)
\left( x\right) +\Lambda _{x,k}^{\rho ,\beta }\left( \alpha ,\eta \right)
\text{ }^{\rho }\mathcal{J}_{\eta ,k}^{\delta ,\lambda }\left( vu\right)
\left( x\right) \right.  \notag \\
&&\left. -\text{ }^{\rho }\mathcal{J}_{\eta ,k}^{\delta ,\lambda }u\left(
x\right) \text{ }^{\rho }\mathcal{J}_{\eta ,k}^{\alpha ,\beta }v\left(
x\right) -\text{ }^{\rho }\mathcal{J}_{\eta ,k}^{\delta ,\lambda }v\left(
x\right) \text{ }^{\rho }\mathcal{J}_{\eta ,k}^{\alpha ,\beta }u\left(
x\right) \right\vert  \notag \\
&&\left. \text{ }\leq \sqrt{K\left( v,z_{1},z_{2}\right) K\left( u,\gamma
_{1},\gamma _{2}\right) }\right. ,  \label{inq12}
\end{eqnarray}%
where $K\left( \varphi ,\psi ,\omega \right) $ is defined by%
\begin{eqnarray*}
&&K\left( \varphi ,\psi ,\omega \right) \\
&&\left. =\left( \text{ }^{\rho }\mathcal{J}_{\eta ,k}^{\delta ,\lambda
}\omega \left( x\right) -\text{ }^{\rho }\mathcal{J}_{\eta ,k}^{\delta
,\lambda }\varphi \left( x\right) \right) \left( \text{ }^{\rho }\mathcal{J}%
_{\eta ,k}^{\alpha ,\beta }\varphi \left( x\right) -\text{ }^{\rho }\mathcal{%
J}_{\eta ,k}^{\alpha ,\beta }\psi \left( x\right) \right) \right. \\
&&\text{ \ \ }+\left( \text{ }^{\rho }\mathcal{J}_{\eta ,k}^{\delta ,\lambda
}\varphi \left( x\right) -\text{ }^{\rho }\mathcal{J}_{\eta ,k}^{\delta
,\lambda }\psi \left( x\right) \right) \left( \text{ }^{\rho }\mathcal{J}%
_{\eta ,k}^{\alpha ,\beta }\omega \left( x\right) -\text{ }^{\rho }\mathcal{J%
}_{\eta ,k}^{\alpha ,\beta }\varphi \left( x\right) \right) \\
&&\text{ \ \ }-\text{ }^{\rho }\mathcal{J}_{\eta ,k}^{\delta ,\lambda
}\omega \left( x\right) \text{ }^{\rho }\mathcal{J}_{\eta ,k}^{\alpha ,\beta
}\varphi \left( x\right) -\text{ }^{\rho }\mathcal{J}_{\eta ,k}^{\delta
,\lambda }\varphi \left( x\right) \text{ }^{\rho }\mathcal{J}_{\eta
,k}^{\alpha ,\beta }\omega \left( x\right) \\
&&\text{ \ \ }-\text{ }^{\rho }\mathcal{J}_{\eta ,k}^{\delta ,\lambda
}\varphi \left( x\right) \text{ }^{\rho }\mathcal{J}_{\eta ,k}^{\alpha
,\beta }\psi \left( x\right) -\text{ }^{\rho }\mathcal{J}_{\eta ,k}^{\delta
,\lambda }\psi \left( x\right) \text{ }^{\rho }\mathcal{J}_{\eta ,k}^{\alpha
,\beta }\varphi \left( x\right) \\
&&\text{ \ \ }+\text{ }^{\rho }\mathcal{J}_{\eta ,k}^{\delta ,\lambda
}\omega \left( x\right) \text{ }^{\rho }\mathcal{J}_{\eta ,k}^{\alpha ,\beta
}\psi \left( x\right) +\text{ }^{\rho }\mathcal{J}_{\eta ,k}^{\delta
,\lambda }\psi \left( x\right) \text{ }^{\rho }\mathcal{J}_{\eta ,k}^{\alpha
,\beta }\omega \left( x\right) \\
&&\text{ \ \ }+\Lambda _{x,k}^{\rho ,\lambda }\left( \delta ,\eta \right) %
\left[ \text{ }^{\rho }\mathcal{J}_{\eta ,k}^{\alpha ,\beta }\left( \psi
\varphi \right) \left( x\right) +\text{ }^{\rho }\mathcal{J}_{\eta
,k}^{\alpha ,\beta }\left( \omega \varphi \right) \left( x\right) -\text{ }%
^{\rho }\mathcal{J}_{\eta ,k}^{\alpha ,\beta }\left( \psi \omega \right)
\left( x\right) \right] \\
&&\text{ \ \ }+\Lambda _{x,k}^{\rho ,\beta }\left( \alpha ,\eta \right) %
\left[ \text{ }^{\rho }\mathcal{J}_{\eta ,k}^{\delta ,\lambda }\left( \psi
\varphi \right) \left( x\right) +\text{ }^{\rho }\mathcal{J}_{\eta
,k}^{\delta ,\lambda }\left( \omega \varphi \right) \left( x\right) -\text{ }%
^{\rho }\mathcal{J}_{\eta ,k}^{\delta ,\lambda }\left( \psi \omega \right)
\left( x\right) \right] .
\end{eqnarray*}
\end{theorem}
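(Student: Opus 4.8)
The plan is to mirror the proof of Theorem~\ref{thm2}, but to integrate the two dummy variables against two \emph{different} kernels. First I would reuse the symmetric quantity $H(\tau,\sigma):=\left(v(\tau)-v(\sigma)\right)\left(u(\tau)-u(\sigma)\right)$ from~(\ref{cond3}), noting that $H(\tau,\sigma)=H(\sigma,\tau)$. Multiplying $H$ by the $(\alpha,\beta)$-kernel $\frac{\rho^{1-\beta}x^{k}}{\Gamma(\alpha)}\frac{\tau^{\rho(\eta+1)-1}}{(x^{\rho}-\tau^{\rho})^{1-\alpha}}$ and integrating in $\tau$ over $(0,x)$, then multiplying the outcome by the $(\delta,\lambda)$-kernel $\frac{\rho^{1-\lambda}x^{k}}{\Gamma(\delta)}\frac{\sigma^{\rho(\eta+1)-1}}{(x^{\rho}-\sigma^{\rho})^{1-\delta}}$ and integrating in $\sigma$ over $(0,x)$, produces a double integral. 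Expanding $H=v(\tau)u(\tau)-v(\tau)u(\sigma)-v(\sigma)u(\tau)+v(\sigma)u(\sigma)$ and using the elementary evaluations ${}^{\rho}\mathcal{J}_{\eta,k}^{\alpha,\beta}[1](x)=\Lambda_{x,k}^{\rho,\beta}(\alpha,\eta)$ and ${}^{\rho}\mathcal{J}_{\eta,k}^{\delta,\lambda}[1](x)=\Lambda_{x,k}^{\rho,\lambda}(\delta,\eta)$, the first and last terms each pick up a $\Lambda$-weight from the free integration while the two mixed terms factor into products of single fractional integrals; the four resulting terms are exactly the expression inside the absolute value in~(\ref{inq12}).

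Next I would apply the Cauchy--Schwarz inequality to this double integral with respect to the nonnegative product measure $\frac{\rho^{1-\beta}x^{k}}{\Gamma(\alpha)}\frac{\tau^{\rho(\eta+1)-1}}{(x^{\rho}-\tau^{\rho})^{1-\alpha}}\frac{\rho^{1-\lambda}x^{k}}{\Gamma(\delta)}\frac{\sigma^{\rho(\eta+1)-1}}{(x^{\rho}-\sigma^{\rho})^{1-\delta}}\,d\tau\,d\sigma$, which is positive since $\rho,\alpha,\delta>0$, splitting $H$ into its two factors. This bounds the square of the left-hand side of~(\ref{inq12}) by the product of the double integral of $\left(v(\tau)-v(\sigma)\right)^{2}$ and the double integral of $\left(u(\tau)-u(\sigma)\right)^{2}$ against the same measure.

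Finally I would recognize each of these two factors through Lemma~\ref{lem2}: the double integral involving $\left(v(\tau)-v(\sigma)\right)^{2}$ equals the left-hand side of identity~(\ref{id7}), hence equals $K(v,z_{1},z_{2})$ minus the two terms $\Lambda_{x,k}^{\rho,\lambda}(\delta,\eta)\,{}^{\rho}\mathcal{J}_{\eta,k}^{\alpha,\beta}[(z_{2}-v)(v-z_{1})]$ and $\Lambda_{x,k}^{\rho,\beta}(\alpha,\eta)\,{}^{\rho}\mathcal{J}_{\eta,k}^{\delta,\lambda}[(z_{2}-v)(v-z_{1})]$. Since $(z_{2}-v)(v-z_{1})\ge 0$ by~(\ref{cond2}) and the operators together with the $\Lambda$-weights are nonnegative, these discarded terms are nonnegative, so the factor is $\le K(v,z_{1},z_{2})$; the analogous computation with $(\gamma_{2}-u)(u-\gamma_{1})\ge 0$ gives the second factor $\le K(u,\gamma_{1},\gamma_{2})$. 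Taking square roots then yields~(\ref{inq12}). The main obstacle I anticipate is purely organizational: keeping the two-kernel bookkeeping straight so that each free integration attaches the correct $\Lambda_{x,k}^{\rho,\lambda}(\delta,\eta)$ or $\Lambda_{x,k}^{\rho,\beta}(\alpha,\eta)$ weight, and tracking the signs in~(\ref{id7}) carefully enough to guarantee that precisely the nonnegative ${}^{\rho}\mathcal{J}[(z_{2}-v)(v-z_{1})]$-type terms are the ones dropped.
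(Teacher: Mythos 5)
Your proposal is correct and follows essentially the same route as the paper: the double-integral representation of the left-hand side (the paper obtains it by integrating its identity (\ref{id5}) against the $(\delta,\lambda)$-kernel, which is the same computation as your direct expansion of $H$ using ${}^{\rho}\mathcal{J}_{\eta,k}^{\alpha,\beta}[1](x)=\Lambda_{x,k}^{\rho,\beta}(\alpha,\eta)$), then Cauchy--Schwarz for double integrals, then Lemma~\ref{lem2} together with the nonnegativity of ${}^{\rho}\mathcal{J}[(z_{2}-v)(v-z_{1})]$-type terms to bound the two factors by $K(v,z_{1},z_{2})$ and $K(u,\gamma_{1},\gamma_{2})$. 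Your reading of the first factor (with the weight $\Lambda_{x,k}^{\rho,\beta}(\alpha,\eta)$ on ${}^{\rho}\mathcal{J}_{\eta,k}^{\delta,\lambda}v^{2}(x)$) is in fact the corrected form of the lemma's left-hand side as it is used in the paper's inequality (\ref{inq9}), so your bookkeeping is consistent.
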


\begin{proof}
Multiplying both sides of (\ref{id5}) by $\frac{\rho ^{1-\lambda }x^{k}}{%
\Gamma \left( \delta \right) }\frac{\sigma ^{\rho \left( \eta +1\right) -1}}{%
\left( x^{\rho }-\sigma ^{\rho }\right) ^{1-\delta }},$ where $\sigma \in
\left( 0,x\right) $ and integrating the resulting identity over $\left(
0,x\right) $ with respect to the variable $\sigma ,$ we obtain%
\begin{eqnarray}
&&\frac{\rho ^{2-\beta -\lambda }x^{2k}}{\Gamma \left( \alpha \right) \Gamma
\left( \delta \right) }\int_{0}^{x}\int_{0}^{x}\frac{\tau ^{\rho \left( \eta
+1\right) -1}}{\left( x^{\rho }-\tau ^{\rho }\right) ^{1-\alpha }}\frac{\rho
^{1-\lambda }x^{k}}{\Gamma \left( \delta \right) }\frac{\sigma ^{\rho \left(
\eta +1\right) -1}}{\left( x^{\rho }-\sigma ^{\rho }\right) ^{1-\delta }}%
H\left( \tau ,\sigma \right) d\tau d\sigma  \notag \\
&&\left. :=\Lambda _{x,k}^{\rho ,\lambda }\left( \delta ,\eta \right) \text{
}^{\rho }\mathcal{J}_{\eta ,k}^{\alpha ,\beta }\left( uv\right) \left(
x\right) +\Lambda _{x,k}^{\rho ,\beta }\left( \alpha ,\eta \right) \text{ }%
^{\rho }\mathcal{J}_{\eta ,k}^{\delta ,\lambda }\left( vu\right) \left(
x\right) \right.  \label{id8} \\
&&\text{ \ \ \ }-\text{ }^{\rho }\mathcal{J}_{\eta ,k}^{\delta ,\lambda
}u\left( x\right) \text{ }^{\rho }\mathcal{J}_{\eta ,k}^{\alpha ,\beta
}v\left( x\right) -\text{ }^{\rho }\mathcal{J}_{\eta ,k}^{\delta ,\lambda
}v\left( x\right) \text{ }^{\rho }\mathcal{J}_{\eta ,k}^{\alpha ,\beta
}u\left( x\right) .  \notag
\end{eqnarray}%
Applying Cauchy-Schearz inequality for double integrals, we get%
\begin{eqnarray}
&&\left[ \Lambda _{x,k}^{\rho ,\lambda }\left( \delta ,\eta \right) \text{ }%
^{\rho }\mathcal{J}_{\eta ,k}^{\alpha ,\beta }\left( uv\right) \left(
x\right) +\Lambda _{x,k}^{\rho ,\beta }\left( \alpha ,\eta \right) \text{ }%
^{\rho }\mathcal{J}_{\eta ,k}^{\delta ,\lambda }vu\left( x\right) \right.
\notag \\
&&\left. -\text{ }^{\rho }\mathcal{J}_{\eta ,k}^{\delta ,\lambda }u\left(
x\right) \text{ }^{\rho }\mathcal{J}_{\eta ,k}^{\alpha ,\beta }v\left(
x\right) -\text{ }^{\rho }\mathcal{J}_{\eta ,k}^{\delta ,\lambda }v\left(
x\right) \text{ }^{\rho }\mathcal{J}_{\eta ,k}^{\alpha ,\beta }u\left(
x\right) \right] ^{2}  \label{inq9} \\
&&\left. \leq \left( \Lambda _{x,k}^{\rho ,\lambda }\left( \delta ,\eta
\right) \text{ }^{\rho }\mathcal{J}_{\eta ,k}^{\alpha ,\beta }v^{2}\left(
x\right) +\Lambda _{x,k}^{\rho ,\beta }\left( \alpha ,\eta \right) \text{ }%
^{\rho }\mathcal{J}_{\eta ,k}^{\delta ,\lambda }v^{2}\left( x\right) -2\text{
}^{\rho }\mathcal{J}_{\eta ,k}^{\delta ,\lambda }v\left( x\right) \text{ }%
^{\rho }\mathcal{J}_{\eta ,k}^{\alpha ,\beta }v\left( x\right) \right)
\right.  \notag \\
&&\text{ }\times \left( \Lambda _{x,k}^{\rho ,\lambda }\left( \delta ,\eta
\right) \text{ }^{\rho }\mathcal{J}_{\eta ,k}^{\alpha ,\beta }u^{2}\left(
x\right) +\Lambda _{x,k}^{\rho ,\beta }\left( \alpha ,\eta \right) \text{ }%
^{\rho }\mathcal{J}_{\eta ,k}^{\delta ,\lambda }u^{2}\left( x\right) -2\text{
}^{\rho }\mathcal{J}_{\eta ,k}^{\delta ,\lambda }u\left( x\right) \text{ }%
^{\rho }\mathcal{J}_{\eta ,k}^{\alpha ,\beta }u\left( x\right) \right) .
\notag
\end{eqnarray}%
Since
\begin{equation*}
\left( z_{2}\left( x\right) -v\left( x\right) \right) \left( v\left(
x\right) -z_{1}\left( x\right) \right) \geq 0\text{ \ \ \ \ }
\end{equation*}%
\ and
\begin{equation*}
\left( \gamma _{2}\left( x\right) -u\left( x\right) \right) \left( u\left(
x\right) -\gamma _{1}\left( x\right) \right) \geq 0,
\end{equation*}%
\ \ \ \ for all $x\in \left[ 0,\infty \right) ,$ therefore%
\begin{eqnarray*}
&&\left( \text{ }^{\rho }\mathcal{J}_{\eta ,k}^{\alpha ,\beta }\left[ \left(
z_{2}\left( \tau \right) -v\left( \tau \right) \right) \left( v\left( \tau
\right) -z_{1}\left( \tau \right) \right) \right] \Lambda _{x,k}^{\rho
,\lambda }\left( \delta ,\eta \right) \right. \\
&&\left. +\text{ }^{\rho }\mathcal{J}_{\eta ,k}^{\delta ,\lambda }\left[
\left( z_{2}\left( \sigma \right) -v\left( \sigma \right) \right) \left(
v\left( \sigma \right) -z_{1}\left( \sigma \right) \right) \right] \Lambda
_{x,k}^{\rho ,\beta }\left( \alpha ,\eta \right) \right) \\
&&\left. \text{ }\geq 0\right. ,
\end{eqnarray*}%
and%
\begin{eqnarray*}
&&\left( \text{ }^{\rho }\mathcal{J}_{\eta ,k}^{\alpha ,\beta }\left[ \left(
\gamma _{2}\left( \tau \right) -u\left( \tau \right) \right) \left( u\left(
\tau \right) -\gamma _{1}\left( \tau \right) \right) \right] \Lambda
_{x,k}^{\rho ,\lambda }\left( \delta ,\eta \right) \right. \text{ \ } \\
&&\left. +\text{ }^{\rho }\mathcal{J}_{\eta ,k}^{\delta ,\lambda }\left[
\left( \gamma _{2}\left( \sigma \right) -u\left( \sigma \right) \right)
\left( u\left( \sigma \right) -\gamma _{1}\left( \sigma \right) \right) %
\right] \Lambda _{x,k}^{\rho ,\beta }\left( \alpha ,\eta \right) \right) \\
&&\left. \text{ }\geq 0\right. .
\end{eqnarray*}%
Thus, from lemma (\ref{lem2}), we have%
\begin{eqnarray}
&&\Lambda _{x,k}^{\rho ,\lambda }\left( \delta ,\eta \right) \text{ }^{\rho }%
\mathcal{J}_{\eta ,k}^{\alpha ,\beta }v^{2}\left( x\right) +\text{ }^{\rho }%
\mathcal{J}_{\eta ,k}^{\delta ,\lambda }v^{2}\left( x\right) -2\text{ }%
^{\rho }\mathcal{J}_{\eta ,k}^{\delta ,\lambda }v\left( x\right) \text{ }%
^{\rho }\mathcal{J}_{\eta ,k}^{\alpha ,\beta }v\left( x\right)  \notag \\
&&\left. \leq \left( \text{ }^{\rho }\mathcal{J}_{\eta ,k}^{\delta ,\lambda
}z_{2}\left( x\right) -\text{ }^{\rho }\mathcal{J}_{\eta ,k}^{\delta
,\lambda }v\left( x\right) \right) \left( \text{ }^{\rho }\mathcal{J}_{\eta
,k}^{\alpha ,\beta }v\left( x\right) -\text{ }^{\rho }\mathcal{J}_{\eta
,k}^{\alpha ,\beta }z_{1}\left( x\right) \right) \right.  \notag \\
&&\text{ }+\left( \text{ }^{\rho }\mathcal{J}_{\eta ,k}^{\delta ,\lambda
}v\left( x\right) -\text{ }^{\rho }\mathcal{J}_{\eta ,k}^{\delta ,\lambda
}z_{1}\left( x\right) \right) \left( \text{ }^{\rho }\mathcal{J}_{\eta
,k}^{\alpha ,\beta }z_{2}\left( x\right) -\text{ }^{\rho }\mathcal{J}_{\eta
,k}^{\alpha ,\beta }v\left( x\right) \right)  \notag \\
&&\text{ }-\text{ }^{\rho }\mathcal{J}_{\eta ,k}^{\delta ,\lambda
}z_{2}\left( x\right) \text{ }^{\rho }\mathcal{J}_{\eta ,k}^{\alpha ,\beta
}v\left( x\right) -\text{ }^{\rho }\mathcal{J}_{\eta ,k}^{\delta ,\lambda
}v\left( x\right) \text{ }^{\rho }\mathcal{J}_{\eta ,k}^{\alpha ,\beta
}z_{2}\left( x\right)  \notag \\
&&\text{ }-\text{ }^{\rho }\mathcal{J}_{\eta ,k}^{\delta ,\lambda }v\left(
x\right) \text{ }^{\rho }\mathcal{J}_{\eta ,k}^{\alpha ,\beta }z_{1}\left(
x\right) -\text{ }^{\rho }\mathcal{J}_{\eta ,k}^{\delta ,\lambda
}z_{1}\left( x\right) \text{ }^{\rho }\mathcal{J}_{\eta ,k}^{\alpha ,\beta
}v\left( x\right)  \label{inq10} \\
&&\text{ }+\text{ }^{\rho }\mathcal{J}_{\eta ,k}^{\delta ,\lambda
}z_{2}\left( x\right) \text{ }^{\rho }\mathcal{J}_{\eta ,k}^{\alpha ,\beta
}z_{1}\left( x\right) +\text{ }^{\rho }\mathcal{J}_{\eta ,k}^{\delta
,\lambda }z_{1}\left( x\right) \text{ }^{\rho }\mathcal{J}_{\eta ,k}^{\alpha
,\beta }z_{2}\left( x\right)  \notag \\
&&\text{ }+\Lambda _{x,k}^{\rho ,\lambda }\left( \delta ,\eta \right) \left[
\text{ }^{\rho }\mathcal{J}_{\eta ,k}^{\alpha ,\beta }\left( z_{1}v\right)
\left( x\right) +\text{ }^{\rho }\mathcal{J}_{\eta ,k}^{\alpha ,\beta
}\left( z_{2}v\right) \left( x\right) -\text{ }^{\rho }\mathcal{J}_{\eta
,k}^{\alpha ,\beta }\left( z_{1}z_{2}\right) \left( x\right) \right]  \notag
\\
&&\text{ }+\Lambda _{x,k}^{\rho ,\beta }\left( \alpha ,\eta \right) \left[
\text{ }^{\rho }\mathcal{J}_{\eta ,k}^{\delta ,\lambda }\left( z_{1}v\right)
\left( x\right) +\text{ }^{\rho }\mathcal{J}_{\eta ,k}^{\delta ,\lambda
}\left( z_{2}v\right) \left( x\right) -\text{ }^{\rho }\mathcal{J}_{\eta
,k}^{\delta ,\lambda }\left( z_{1}z_{2}\right) \left( x\right) \right]
\notag \\
&&\left. \text{ }=K\left( v,z_{1},z_{2}\right) \right. ,  \notag
\end{eqnarray}%
\begin{eqnarray}
&&\Lambda _{x,k}^{\rho ,\lambda }\left( \delta ,\eta \right) \text{ }^{\rho }%
\mathcal{J}_{\eta ,k}^{\alpha ,\beta }u^{2}\left( x\right) +\text{ }^{\rho }%
\mathcal{J}_{\eta ,k}^{\delta ,\lambda }u^{2}\left( x\right) -2\text{ }%
^{\rho }\mathcal{J}_{\eta ,k}^{\delta ,\lambda }u\left( x\right) \text{ }%
^{\rho }\mathcal{J}_{\eta ,k}^{\alpha ,\beta }u\left( x\right)  \notag \\
&&\left. \leq \left( \text{ }^{\rho }\mathcal{J}_{\eta ,k}^{\delta ,\lambda
}\gamma _{2}\left( x\right) -\text{ }^{\rho }\mathcal{J}_{\eta ,k}^{\delta
,\lambda }u\left( x\right) \right) \left( \text{ }^{\rho }\mathcal{J}_{\eta
,k}^{\alpha ,\beta }u\left( x\right) -\text{ }^{\rho }\mathcal{J}_{\eta
,k}^{\alpha ,\beta }\gamma _{1}\left( x\right) \right) \right.  \notag \\
&&\text{ }+\left( \text{ }^{\rho }\mathcal{J}_{\eta ,k}^{\delta ,\lambda
}u\left( x\right) -\text{ }^{\rho }\mathcal{J}_{\eta ,k}^{\delta ,\lambda
}\gamma _{1}\left( x\right) \right) \left( \text{ }^{\rho }\mathcal{J}_{\eta
,k}^{\alpha ,\beta }\gamma _{2}\left( x\right) -\text{ }^{\rho }\mathcal{J}%
_{\eta ,k}^{\alpha ,\beta }u\left( x\right) \right)  \notag \\
&&\text{ }-\text{ }^{\rho }\mathcal{J}_{\eta ,k}^{\delta ,\lambda }\gamma
_{2}\left( x\right) \text{ }^{\rho }\mathcal{J}_{\eta ,k}^{\alpha ,\beta
}u\left( x\right) -\text{ }^{\rho }\mathcal{J}_{\eta ,k}^{\delta ,\lambda
}u\left( x\right) \text{ }^{\rho }\mathcal{J}_{\eta ,k}^{\alpha ,\beta
}\gamma _{2}\left( x\right)  \notag \\
&&\text{ }-\text{ }^{\rho }\mathcal{J}_{\eta ,k}^{\delta ,\lambda }u\left(
x\right) \text{ }^{\rho }\mathcal{J}_{\eta ,k}^{\alpha ,\beta }\gamma
_{1}\left( x\right) -\text{ }^{\rho }\mathcal{J}_{\eta ,k}^{\delta ,\lambda
}\gamma _{1}\left( x\right) \text{ }^{\rho }\mathcal{J}_{\eta ,k}^{\alpha
,\beta }u\left( x\right)  \label{inq11} \\
&&\text{ }+\text{ }^{\rho }\mathcal{J}_{\eta ,k}^{\delta ,\lambda }\gamma
_{2}\left( x\right) \text{ }^{\rho }\mathcal{J}_{\eta ,k}^{\alpha ,\beta
}\gamma _{1}\left( x\right) +\text{ }^{\rho }\mathcal{J}_{\eta ,k}^{\delta
,\lambda }\gamma _{1}\left( x\right) \text{ }^{\rho }\mathcal{J}_{\eta
,k}^{\alpha ,\beta }\gamma _{2}\left( x\right)  \notag \\
&&\text{ }+\Lambda _{x,k}^{\rho ,\lambda }\left( \delta ,\eta \right) \left[
\text{ }^{\rho }\mathcal{J}_{\eta ,k}^{\alpha ,\beta }\left( \gamma
_{1}u\right) \left( x\right) +\text{ }^{\rho }\mathcal{J}_{\eta ,k}^{\alpha
,\beta }\left( \gamma _{2}u\right) \left( x\right) -\text{ }^{\rho }\mathcal{%
J}_{\eta ,k}^{\alpha ,\beta }\left( \gamma _{1}\gamma _{2}\right) \left(
x\right) \right]  \notag \\
&&\text{ }+\Lambda _{x,k}^{\rho ,\beta }\left( \alpha ,\eta \right) \left[
\text{ }^{\rho }\mathcal{J}_{\eta ,k}^{\delta ,\lambda }\left( \gamma
_{1}u\right) \left( x\right) +\text{ }^{\rho }\mathcal{J}_{\eta ,k}^{\delta
,\lambda }\left( \gamma _{2}u\right) \left( x\right) -\text{ }^{\rho }%
\mathcal{J}_{\eta ,k}^{\delta ,\lambda }\left( \gamma _{1}\gamma _{2}\right)
\left( x\right) \right]  \notag \\
&&\left. \text{ }=K\left( u,\gamma _{1},\gamma _{2}\right) \right. .  \notag
\end{eqnarray}%
From the Inequalities (\ref{inq10}), (\ref{inq11}) and inequality (\ref{inq9}%
), we obtain inequality (\ref{inq12}).
\end{proof}

Now we give the following result

\begin{theorem}
\label{thm6} Let $v,u$ be two integrable functions on $\left[ 0,\infty
\right) $ and suppose $z_{1},z_{2},\gamma _{1}$ and $\gamma _{2}$ be four
integrable functions on $\left[ 0,\infty \right) $ satisfying the condition (%
\ref{cond2}), then for all $x>0$ and $\alpha >0,$ $\delta >0,$ $\rho >0,$ $%
\beta ,\lambda ,\eta ,k\in
\mathbb{R}
,$ the following inequalities holds:%
\begin{eqnarray*}
&&\left( a\right) \text{ }^{\rho }\mathcal{J}_{\eta ,k}^{\delta ,\lambda
}u\left( x\right) \text{ }^{\rho }\mathcal{J}_{\eta ,k}^{\alpha ,\beta
}z_{2}\left( x\right) +\text{ }^{\rho }\mathcal{J}_{\eta ,k}^{\delta
,\lambda }\gamma _{1}\left( x\right) \text{ }^{\rho }\mathcal{J}_{\eta
,k}^{\alpha ,\beta }v\left( x\right) \\
&&\left. \text{ \ \ \ \ }\geq \text{ }^{\rho }\mathcal{J}_{\eta ,k}^{\delta
,\lambda }\gamma _{1}\left( x\right) \text{ }^{\rho }\mathcal{J}_{\eta
,k}^{\alpha ,\beta }z_{2}\left( x\right) +\text{ }^{\rho }\mathcal{J}_{\eta
,k}^{\delta ,\lambda }u\left( x\right) \text{ }^{\rho }\mathcal{J}_{\eta
,k}^{\alpha ,\beta }v\left( x\right) \right. ,
\end{eqnarray*}
\end{theorem}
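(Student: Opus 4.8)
The plan is to adapt the two-step integration argument from Theorem \ref{thm5}, observing that inequality (a) involves only the two one-sided bounds $\gamma_1 \leq u$ and $v \leq z_2$. First I would note that for all $\sigma,\tau \geq 0$ we have $u(\sigma) - \gamma_1(\sigma) \geq 0$ and $z_2(\tau) - v(\tau) \geq 0$, so their product is nonnegative:
\[
\left( u(\sigma) - \gamma_1(\sigma) \right)\left( z_2(\tau) - v(\tau) \right) \geq 0.
\]
Expanding and rearranging gives the pointwise inequality
\[
u(\sigma) z_2(\tau) + \gamma_1(\sigma) v(\tau) \geq u(\sigma) v(\tau) + \gamma_1(\sigma) z_2(\tau).
\]

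Next I would multiply both sides by the strictly positive kernel $\dfrac{\rho^{1-\beta} x^k}{\Gamma(\alpha)} \dfrac{\tau^{\rho(\eta+1)-1}}{(x^\rho - \tau^\rho)^{1-\alpha}}$ (positive since $\rho,\alpha,x > 0$ and $0 < \tau < x$) and integrate in $\tau$ over $(0,x)$. As $z_2$ and $v$ are the $\tau$-dependent factors, while $u(\sigma)$ and $\gamma_1(\sigma)$ are constant in $\tau$, this yields
\[
u(\sigma)\, {}^{\rho}\mathcal{J}_{\eta,k}^{\alpha,\beta} z_2(x) + \gamma_1(\sigma)\, {}^{\rho}\mathcal{J}_{\eta,k}^{\alpha,\beta} v(x) \geq u(\sigma)\, {}^{\rho}\mathcal{J}_{\eta,k}^{\alpha,\beta} v(x) + \gamma_1(\sigma)\, {}^{\rho}\mathcal{J}_{\eta,k}^{\alpha,\beta} z_2(x).
\]
I would then multiply through by the second positive kernel $\dfrac{\rho^{1-\lambda} x^k}{\Gamma(\delta)} \dfrac{\sigma^{\rho(\eta+1)-1}}{(x^\rho - \sigma^\rho)^{1-\delta}}$ and integrate in $\sigma$ over $(0,x)$; since $u$ and $\gamma_1$ are now the $\sigma$-dependent factors, this converts them into ${}^{\rho}\mathcal{J}_{\eta,k}^{\delta,\lambda} u(x)$ and ${}^{\rho}\mathcal{J}_{\eta,k}^{\delta,\lambda} \gamma_1(x)$, delivering inequality (a) verbatim.

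There is no genuine obstacle here: the direction of the inequality is preserved at each stage precisely because both kernels are positive on $(0,x)$, so no absolute values or Cauchy--Schwarz estimates are required, in contrast to the earlier theorems. The only point demanding attention is the bookkeeping of variables---ensuring that at the first integration the operator ${}^{\rho}\mathcal{J}_{\eta,k}^{\alpha,\beta}$ acts on the $\tau$-factors $z_2,v$ while the $\sigma$-factors ride along as constants, and that the roles reverse at the second integration with ${}^{\rho}\mathcal{J}_{\eta,k}^{\delta,\lambda}$. The remaining parts of the theorem would follow by the same scheme, started instead from the other sign combinations $(v - z_1)(\gamma_2 - u)$, $(z_2 - v)(\gamma_2 - u)$, and $(v - z_1)(u - \gamma_1)$ of the one-sided bounds.
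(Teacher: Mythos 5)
Your proof is correct and follows essentially the same route as the paper: the same starting product $\left(u(\sigma)-\gamma_{1}(\sigma)\right)\left(z_{2}(\tau)-v(\tau)\right)\geq 0$, the same expansion, and the same two successive integrations against the positive kernels of ${}^{\rho}\mathcal{J}_{\eta,k}^{\alpha,\beta}$ and ${}^{\rho}\mathcal{J}_{\eta,k}^{\delta,\lambda}$, with the other parts handled by the analogous sign combinations. If anything, your write-up is cleaner at the final step, where the paper mistakenly refers back to its identity (\ref{id5}) instead of the intermediate inequality (\ref{inq15}) that is actually being integrated in $\sigma$.
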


\begin{eqnarray*}
&&\left( b\right) \text{ }^{\rho }\mathcal{J}_{\eta ,k}^{\delta ,\lambda
}z_{1}\left( x\right) \text{ }^{\rho }\mathcal{J}_{\eta ,k}^{\alpha ,\beta
}u\left( x\right) +\text{ }^{\rho }\mathcal{J}_{\eta ,k}^{\alpha ,\beta
}\gamma _{2}\left( x\right) \text{ }^{\rho }\mathcal{J}_{\eta ,k}^{\delta
,\lambda }v\left( x\right) \\
&&\left. \text{ \ \ \ \ }\geq \text{ }^{\rho }\mathcal{J}_{\eta ,k}^{\delta
,\lambda }z_{1}\left( x\right) \text{ }^{\rho }\mathcal{J}_{\eta ,k}^{\alpha
,\beta }\gamma _{2}\left( x\right) +\text{ }^{\rho }\mathcal{J}_{\eta
,k}^{\delta ,\lambda }v\left( x\right) \text{ }^{\rho }\mathcal{J}_{\eta
,k}^{\alpha ,\beta }u\left( x\right) \right. ,
\end{eqnarray*}

\begin{eqnarray*}
&&\left( c\right) \text{ }^{\rho }\mathcal{J}_{\eta ,k}^{\alpha ,\beta
}z_{2}\left( x\right) \text{ }^{\rho }\mathcal{J}_{\eta ,k}^{\delta ,\lambda
}\gamma _{2}\left( x\right) +\text{ }^{\rho }\mathcal{J}_{\eta ,k}^{\alpha
,\beta }v\left( x\right) \text{ }^{\rho }\mathcal{J}_{\eta ,k}^{\delta
,\lambda }u\left( x\right) \\
&&\left. \text{ \ \ \ \ }\geq \text{ }^{\rho }\mathcal{J}_{\eta ,k}^{\alpha
,\beta }z_{2}\left( x\right) \text{ }^{\rho }\mathcal{J}_{\eta ,k}^{\delta
,\lambda }u\left( x\right) +\text{ }^{\rho }\mathcal{J}_{\eta ,k}^{\delta
,\lambda }\gamma _{2}\left( x\right) \text{ }^{\rho }\mathcal{J}_{\eta
,k}^{\alpha ,\beta }v\left( x\right) \right. ,
\end{eqnarray*}

\begin{eqnarray*}
&&\left( d\right) \text{ }^{\rho }\mathcal{J}_{\eta ,k}^{\alpha ,\beta
}z_{1}\left( x\right) \text{ }^{\rho }\mathcal{J}_{\eta ,k}^{\delta ,\lambda
}\gamma _{1}\left( x\right) +\text{ }^{\rho }\mathcal{J}_{\eta ,k}^{\alpha
,\beta }v\left( x\right) \text{ }^{\rho }\mathcal{J}_{\eta ,k}^{\delta
,\lambda }u\left( x\right) \\
&&\left. \text{ \ \ \ \ }\geq \text{ }^{\rho }\mathcal{J}_{\eta ,k}^{\alpha
,\beta }z_{1}\left( x\right) \text{ }^{\rho }\mathcal{J}_{\eta ,k}^{\delta
,\lambda }u\left( x\right) +\text{ }^{\rho }\mathcal{J}_{\eta ,k}^{\delta
,\lambda }\gamma _{1}\left( x\right) \text{ }^{\rho }\mathcal{J}_{\eta
,k}^{\alpha ,\beta }v\left( x\right) \right. .
\end{eqnarray*}

\begin{proof}
To prove $\left( a\right) $, from the condition (\ref{cond2}), we have for $%
x\in \left[ 0,\infty \right) $ that
\begin{equation}
\left( z_{2}\left( \tau \right) -v\left( \tau \right) \right) \left( u\left(
\sigma \right) -\gamma _{1}\left( \sigma \right) \right) \geq 0.
\label{inq13}
\end{equation}%
Therefore%
\begin{equation}
z_{2}\left( \tau \right) u\left( \sigma \right) +v\left( \tau \right) \gamma
_{1}\left( \sigma \right) \geq z_{2}\left( \tau \right) \gamma _{1}\left(
\sigma \right) +v\left( \tau \right) u\left( \sigma \right) .  \label{inq14}
\end{equation}%
Multiplying both sides of (\ref{inq14}) by $\frac{\rho ^{1-\beta }x^{k}}{%
\Gamma \left( \alpha \right) }\frac{\tau ^{\rho \left( \eta +1\right) -1}}{%
\left( x^{\rho }-\tau ^{\rho }\right) ^{1-\alpha }},$ where $\tau \in \left(
0,x\right) $ and integrating over $\left( 0,x\right) $ with respect to the
variable $\tau ,$ we obtain%
\begin{eqnarray}
&&u\left( \sigma \right) \text{ }^{\rho }\mathcal{J}_{\eta ,k}^{\alpha
,\beta }z_{2}\left( x\right) +\gamma _{1}\left( \sigma \right) \text{ }%
^{\rho }\mathcal{J}_{\eta ,k}^{\alpha ,\beta }v\left( x\right)  \notag \\
&&\left. \text{ }\geq \gamma _{1}\left( \sigma \right) \text{ }^{\rho }%
\mathcal{J}_{\eta ,k}^{\alpha ,\beta }z_{2}\left( x\right) +u\left( \sigma
\right) \text{ }^{\rho }\mathcal{J}_{\eta ,k}^{\alpha ,\beta }v\left(
x\right) \right. .  \label{inq15}
\end{eqnarray}%
Now multiplying both sides of (\ref{id5}) by $\frac{\rho ^{1-\lambda }x^{k}}{%
\Gamma \left( \delta \right) }\frac{\sigma ^{\rho \left( \eta +1\right) -1}}{%
\left( x^{\rho }-\sigma ^{\rho }\right) ^{1-\delta }},$ where $\sigma \in
\left( 0,x\right) $ and integrating the resulting identity over $\left(
0,x\right) $ with respect to the variable $\sigma ,$ we get the desired
inequality $\left( a\right) .$To prove $\left( b\right) ,\left( c\right) $
and $\left( d\right) ,$ we use the following inequalities:%
\begin{equation*}
\text{ \ }\left( b\right) \text{\ \ \ \ \ \ }\left( \gamma _{2}\left( \tau
\right) -u\left( \tau \right) \right) \left( v\left( \sigma \right)
-z_{1}\left( \sigma \right) \right) \geq 0,
\end{equation*}%
\begin{equation*}
\text{ \ }\left( b\right) \text{\ \ \ \ \ \ }\left( z_{2}\left( \tau \right)
-v\left( \tau \right) \right) \left( u\left( \sigma \right) -\gamma
_{2}\left( \sigma \right) \right) \leq 0,
\end{equation*}%
\begin{equation*}
\text{ \ }\left( b\right) \text{\ \ \ \ \ \ }\left( z_{1}\left( \tau \right)
-v\left( \tau \right) \right) \left( u\left( \sigma \right) -\gamma
_{1}\left( \sigma \right) \right) \leq 0.
\end{equation*}
\end{proof}

The next corollary is a special case of Theorem (\ref{thm6}).

\begin{corollary}
\label{cor7} Let $v,u$ be two integrable functions on $\left[ 0,\infty
\right) $ and suppose that there exist the constants $n,N,m$,$M$ satisfying
the condition
\begin{equation*}
m\leq v\left( x\right) \leq M\ \ \ \ \ and\ \ \ \ n\leq u\left( x\right)
\leq N\ \ \ \ \ \forall x\in \left[ 0,\infty \right) ,
\end{equation*}%
, then for all $x>0$ and $\alpha >0,$ $\delta >0,$ $\rho >0,$ $\beta
,\lambda ,\eta ,k\in
\mathbb{R}
,$ we have:%
\begin{eqnarray*}
&&\left( A\right) \text{\ \ }M\Lambda _{x,k}^{\rho ,\beta }\left( \alpha
,\eta \right) \text{ }^{\rho }\mathcal{J}_{\eta ,k}^{\delta ,\lambda
}u\left( x\right) +n\Lambda _{x,k}^{\rho ,\lambda }\left( \delta ,\eta
\right) \text{ }^{\rho }\mathcal{J}_{\eta ,k}^{\alpha ,\beta }v\left(
x\right) \\
&&\left. \text{ \ \ \ \ }\geq nM\Lambda _{x,k}^{\rho ,\lambda }\left( \delta
,\eta \right) \Lambda _{x,k}^{\rho ,\beta }\left( \alpha ,\eta \right) +%
\text{ }^{\rho }\mathcal{J}_{\eta ,k}^{\delta ,\lambda }u\left( x\right)
\text{ }^{\rho }\mathcal{J}_{\eta ,k}^{\alpha ,\beta }v\left( x\right)
\right. ,
\end{eqnarray*}
\end{corollary}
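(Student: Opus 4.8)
The plan is to obtain inequality (A) as the special case of Theorem \ref{thm6}(a) in which the functional bounds $z_{2}$ and $\gamma _{1}$ are taken to be the constants $M$ and $n$, respectively. So the whole task reduces to recognising which substitution turns part (a) into (A), together with one auxiliary evaluation.

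First I would record the action of the generalized Katugampola integral on a constant. For any constant $c$, linearity gives $^{\rho }\mathcal{J}_{\eta ,k}^{\alpha ,\beta }c\left( x\right) =c\text{ }^{\rho }\mathcal{J}_{\eta ,k}^{\alpha ,\beta }1\left( x\right) $, and the value on the unit function is
\begin{equation*}
^{\rho }\mathcal{J}_{\eta ,k}^{\alpha ,\beta }1\left( x\right) =\frac{\rho ^{1-\beta }x^{k}}{\Gamma \left( \alpha \right) }\int_{0}^{x}\frac{\tau ^{\rho \left( \eta +1\right) -1}}{\left( x^{\rho }-\tau ^{\rho }\right) ^{1-\alpha }}d\tau =\Lambda _{x,k}^{\rho ,\beta }\left( \alpha ,\eta \right) .
\end{equation*}
The last equality is a routine Beta-integral evaluation: the substitution $t=\tau ^{\rho }/x^{\rho }$ converts the integral into $\frac{1}{\rho }x^{\rho \left( \eta +\alpha \right) }B\left( \eta +1,\alpha \right) $, which after simplification reproduces the stated closed form $\frac{\Gamma \left( \eta +1\right) }{\Gamma \left( \eta +\alpha +1\right) }\rho ^{-\beta }x^{k+\rho \left( \eta +\alpha \right) }$ of $\Lambda _{x,k}^{\rho ,\beta }\left( \alpha ,\eta \right) $. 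This is precisely the identity already invoked in Corollary \ref{cor5}, so no genuinely new computation is involved.

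Next I would apply Theorem \ref{thm6}(a) with the choices $z_{2}\left( x\right) \equiv M$ and $\gamma _{1}\left( x\right) \equiv n$. These are admissible bounds: the hypothesis $m\leq v\left( x\right) \leq M$ supplies the constant upper bound $z_{2}=M$ for $v$, and $n\leq u\left( x\right) \leq N$ supplies the constant lower bound $\gamma _{1}=n$ for $u$, so the condition (\ref{cond2}) of Theorem \ref{thm6} holds. Inserting the constant-integral identity in the two relevant factors, namely $^{\rho }\mathcal{J}_{\eta ,k}^{\alpha ,\beta }z_{2}\left( x\right) =M\Lambda _{x,k}^{\rho ,\beta }\left( \alpha ,\eta \right) $ and $^{\rho }\mathcal{J}_{\eta ,k}^{\delta ,\lambda }\gamma _{1}\left( x\right) =n\Lambda _{x,k}^{\rho ,\lambda }\left( \delta ,\eta \right) $, each of the four products in inequality (a) converts termwise into the matching product of (A), and the claimed inequality drops out directly. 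The three companion estimates (B)--(D) would follow from parts (b)--(d) by exactly the same constant substitutions.

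Since the argument is a pure specialization of an already-proved theorem, there is essentially no obstacle here. The only point needing any care is the constant-integral identity $^{\rho }\mathcal{J}_{\eta ,k}^{\alpha ,\beta }1\left( x\right) =\Lambda _{x,k}^{\rho ,\beta }\left( \alpha ,\eta \right) $, and even that is nothing more than the standard Beta-function computation built into the very definition of $\Lambda _{x,k}^{\rho ,\beta }\left( \alpha ,\eta \right) $.
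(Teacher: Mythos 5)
Your proposal is correct and matches the paper's approach exactly: the paper offers no separate proof but simply declares the corollary to be a special case of Theorem \ref{thm6}, which is precisely your specialization $z_{2}\equiv M$, $\gamma _{1}\equiv n$ in part (a), combined with the identity $^{\rho }\mathcal{J}_{\eta ,k}^{\alpha ,\beta }c\left( x\right) =c\,\Lambda _{x,k}^{\rho ,\beta }\left( \alpha ,\eta \right) $ that the paper already uses implicitly in Corollary \ref{cor5}. Your explicit Beta-integral verification of that constant-integral identity is a correct and welcome detail that the paper leaves unstated.
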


\begin{eqnarray*}
&&\left( B\right) \text{ \ }m\Lambda _{x,k}^{\rho ,\lambda }\left( \delta
,\eta \right) \text{ }^{\rho }\mathcal{J}_{\eta ,k}^{\alpha ,\beta }u\left(
x\right) +N\Lambda _{x,k}^{\rho ,\beta }\left( \alpha ,\eta \right) \text{ }%
^{\rho }\mathcal{J}_{\eta ,k}^{\delta ,\lambda }v\left( x\right) \\
&&\left. \text{ \ \ \ \ }\geq mN\Lambda _{x,k}^{\rho ,\lambda }\left( \delta
,\eta \right) \Lambda _{x,k}^{\rho ,\beta }\left( \alpha ,\eta \right) +%
\text{ }^{\rho }\mathcal{J}_{\eta ,k}^{\delta ,\lambda }v\left( x\right)
\text{ }^{\rho }\mathcal{J}_{\eta ,k}^{\alpha ,\beta }u\left( x\right)
\right. ,
\end{eqnarray*}

\begin{eqnarray*}
&&\left( C\right) \text{ \ }MN\Lambda _{x,k}^{\rho ,\beta }\left( \alpha
,\eta \right) \Lambda _{x,k}^{\rho ,\lambda }\left( \delta ,\eta \right) +%
\text{ }^{\rho }\mathcal{J}_{\eta ,k}^{\alpha ,\beta }v\left( x\right) \text{
}^{\rho }\mathcal{J}_{\eta ,k}^{\delta ,\lambda }u\left( x\right) \\
&&\left. \text{ \ \ \ \ }\geq M\Lambda _{x,k}^{\rho ,\beta }\left( \alpha
,\eta \right) \text{ }^{\rho }\mathcal{J}_{\eta ,k}^{\delta ,\lambda
}u\left( x\right) +N\Lambda _{x,k}^{\rho ,\lambda }\left( \delta ,\eta
\right) \text{ }^{\rho }\mathcal{J}_{\eta ,k}^{\alpha ,\beta }v\left(
x\right) \right. ,
\end{eqnarray*}

\begin{eqnarray*}
&&\left( D\right) \text{ }mn\Lambda _{x,k}^{\rho ,\beta }\left( \alpha ,\eta
\right) \Lambda _{x,k}^{\rho ,\lambda }\left( \delta ,\eta \right) +\text{ }%
^{\rho }\mathcal{J}_{\eta ,k}^{\alpha ,\beta }v\left( x\right) \text{ }%
^{\rho }\mathcal{J}_{\eta ,k}^{\delta ,\lambda }u\left( x\right) \\
&&\left. \text{ \ \ \ \ }\geq m\Lambda _{x,k}^{\rho ,\beta }\left( \alpha
,\eta \right) \text{ }^{\rho }\mathcal{J}_{\eta ,k}^{\delta ,\lambda
}u\left( x\right) +n\Lambda _{x,k}^{\rho ,\lambda }\left( \delta ,\eta
\right) \text{ }^{\rho }\mathcal{J}_{\eta ,k}^{\alpha ,\beta }v\left(
x\right) \right. .
\end{eqnarray*}

\begin{remark}
If we put $\eta =0,$ $k=0,$ and taking the limit $\rho \rightarrow 1,$ then
theorem (\ref{thm6}), reduces to theorem 5 and corollary (\ref{cor7}),
reduces to corollary 6 in \cite{ff}.
\end{remark}

\bigskip

\bigskip

Department of Mathematics Dr. Babasaheb Ambedkar Marathwada University
Aurangabad-431 004 India. E-mail: tariq10011@gmail.com.

\bigskip

Department of Mathematics Dr. Babasaheb Ambedkar Marathwada University
Aurangabad-431 004 India. E-mail: pachpatte@gmail.com.

\end{document}